\documentclass[10pt,reqno,oneside]{amsart}

\usepackage{amsmath, amsfonts, amssymb, latexsym, amsthm}

\usepackage{geometry}
\usepackage{mathrsfs}
\usepackage{graphicx} % add figures
\usepackage{verbatim} % useful for program listings
\usepackage{color}    % color in text
\usepackage{subfigure}% side-by-side figures
\usepackage{hyperref} % hypertext links

\usepackage{fancyhdr} % also for special heading

\newtheorem{theorem}{Theorem}
\theoremstyle{plain}

\newtheorem{lemma}[theorem]{Lemma}
\newtheorem{definition}[theorem]{Definition}
\newtheorem{assumption}[theorem]{Assumption}

\newtheorem{remark}[theorem]{Remark}

\numberwithin{equation}{section}
\numberwithin{theorem}{section}

\newcommand{\cA}{\mathcal{A}}

\newcommand{\cD}{\mathcal{D}}

\newcommand{\cF}{\mathcal{F}}

\newcommand{\cS}{\mathcal{S}}

\newcommand{\E}{\mathbb{E}}
\newcommand{\R}{\mathbb{R}}
\newcommand{\F}{\mathbb{F}}

\newcommand{\bP}{\mathbb{P}} % \P is used by the system

\newcommand{\wt}{\widetilde}
\newcommand{\ds}{\displaystyle}
\newcommand{\Ito}{It\^{o}}
\newcommand{\e}{\varepsilon}

\begin{document}

%%%%%%%%%%%%%%%%%%%%%%%%%%%%%%%%%%%%%%%%%%%%%%%%%%%
\title[Uncertainty principle and unique continuation property for SHE]
{Hardy's Uncertainty principle and unique continuation property for
  stochastic heat equations}
%%%%%%%%%%%%%%%%%%%%%%%%%%%%%%%%%%%%%%%%%%%%%%%%%%%

\author{Aingeru Fern\'{a}ndez-Bertolin}
\author{Jie Zhong}
\keywords{Hardy uncertainty principle, unique continuation, stochastic heat equation}
\subjclass[2010]{35B05, 35B60, 60H15}
\address{A. Fern\'andez-Bertolin: Departamento de Matem\'aticas,  Universidad del Pa\'is Vasco UPV/EHU, apartado 644, 48080, Bilbao, Spain}
\email{aingeru.fernandez@ehu.eus}
\address{J. Zhong: Department of Rochester, University of Rochester, NY 14627, USA}
\email{jiezhongmath@gmail.com}

\maketitle

\begin{abstract}
The goal of this paper is to prove a uniqueness result for a stochastic heat equation with a randomly perturbed potential, which can be considered as a variant of Hardy's uncertainty principle for stochastic heat evolutions.
\end{abstract}

\section{Introduction}
It is well known that the unique continuation property has extensive applications in control theory of partial differential equations, especially observability for the system; see \cite{Zuazua2007} for details, or \cite{YZ16} for a stochastic case.

In this paper, we extend a uniqueness result of deterministic equations to the following stochastic heat equation with
multiplicative noise:
\begin{equation}
  \label{eq:main eqn}
  \begin{cases}
  du = (\Delta u + V(t,x)u)\, dt + G(t,x)u\, dW(t), \quad (t,x)\in (0,1]\times \R^n,\\
 u(0) = u_0,
\end{cases}
\end{equation}
which formally can be viewed as a heat evolution with a randomly perturbed potential $V + G\dot{W}$.

Our goal is to understand sufficient conditions for the solution $u$ of equation \eqref{eq:main eqn}, the potential $V$, the noise $G$ and the behavior of the solution at two different times $t_0=0$ and $t_1=1$, in order to guarantee that $u\equiv0$. In the deterministic case, there is a series of papers \cite{cekpv,ekpv1,Escauriaza2008,ekpv3,ekpv5,ekpv4,ekpv6}, where the authors solve this problem for the Schr\"{o}dinger and heat equations. The methodology involved in their project is very robust, as it can be seen in extensions of their results to the magnetic Schr\"{o}dinger equation \cite{bfgrv,cf} and more recently to the discrete Schr\"{o}dinger equation \cite{fb3,fbv,jlmp}. Here we aim to adapt their methods to the stochastic setting.

The motivation of proving unique continuation properties for solutions of Schr\"{o}dinger or heat equations knowing the behavior of the solution at two different times comes from the very famous result of G. H. Hardy \cite{ha} or \cite[page 131]{ss}, concerning the decay of a function $f$ and its Fourier transform
\[
\hat{f}(\xi)=(2\pi)^{-\frac{n}{2}}\int_{\mathbb{R}^n}e^{-i\xi\cdot x}f(x)\,dx.
\]

Under this definition of the Fourier transform, Hardy proves:

 \textit{If $f(x)=O(e^{-|x|^2/\beta^2}),\ \hat{f}(\xi)=O(e^{-4|\xi|^2/\alpha^2})$ and $\alpha\beta<4$, then $f\equiv0$. Also, if $\alpha\beta=4,\ f$ is a constant multiple of $e^{-|x|^2/\beta^2}$.}

Since its original formulation, the Hardy uncertainty principle has been extended to more general settings. For instance, we have the following $L^2-$version of the uncertainty principle \cite{sst}:
\[
e^{|x|^2/\beta^2}f,\ e^{4|\xi|^2/\alpha^2}\hat{f}\in L^2(\mathbb{R}^n)\text{ and }\alpha\beta\le 4 \Longrightarrow f\equiv0.
\]

Moreover, thanks to the expression of solutions of free Schr\"{o}dinger and heat equations, it is possible to rewrite the Hardy uncertainty principle in terms of solutions of these equations. Since we are concerned with the heat equation, in this case it is known that
\[
f,\ e^{|x|^2/\delta^2}e^{\Delta}f\in L^2(\mathbb{R}^n)\text{ for some }\delta\le 2\Longrightarrow f\equiv0.
\]

Thanks to logarithmic convexity properties of solutions with fast decay properties at two different times, the authors extend in \cite{Escauriaza2008,ekpv6} this dynamic Hardy uncertainty principle to solutions of the equation $\partial_tu=\Delta u+Vu$, where the potential $V$ is bounded, using only real variable techniques, whereas the previous known proofs of the Hardy uncertainty principle, up to the endpoint case, were based on complex analysis arguments. In the preliminary non-sharp version of the result in \cite{Escauriaza2008}, they prove first that a solution with Gaussian decay at time $t_0=0$ and $t_1=1$ preserves this decay at any time in  between, and, furthermore, in the open interval $(0,1)$ the solution exhibits better decay properties. Combining this result with a Carleman estimate, they are able to conclude uniqueness for solutions with a non-sharp rate of decay. Every step of the proof follows a formal approach that is justified at the end of the proof, which represents a considerable technical difficulty.

It is reasonable to think that in the presence of a noise term, the statement will not change, at least for small noises. We see in this paper that the approach introduced in \cite{Escauriaza2008} can be adapted to our setting to extend the Hardy uncertainty principle, but the rate of the decay depends on the noise. However, this result is likely to be improved, but we do not have a hint about the sharp rate of the decay at the moment.

We need to assume the following hypothesis on the potential $V$ and the noise $G$ in equation \eqref{eq:main eqn}.

\begin{assumption}
  \label{assump:V-G}
The measurable functions $V$ and $G: [0,1]\times\R^n \to \R$ satisfy the following conditions:
\begin{itemize}
  \item[(1)] $V$ is bounded on $[0,1]\times\R^n$, and $G\in C^1_b([0,1]\times \R^n);$
  \item[(2)] Given $\gamma>0$, there exists $\e>0$ such that 
\[
\ds \sup_{t\in [0,1]}|G(t,x)|\le \frac{\sqrt{\gamma}}{(\sqrt{1+4\gamma})^{1-\e}}|x|^{-\e},\quad\text{for}~|x|\ge \frac{\max\{\gamma,2\}}{\sqrt{1+4\gamma}};
\]
  \item[(3)] $\displaystyle \lim_{L\to\infty} \sup_{t\in[0,1],|x|>L} |V(t,x)|= \lim_{L\to\infty} \sup_{t\in[0,1],|x|>L} |\nabla G(t,x)|=0$.
\end{itemize}
\end{assumption}

Notice that the potential $V$ considered in the deterministic case is bounded whereas in the stochastic setting we require it to slightly decay at infinity. If we only carry out the formal arguments, we do not need neither the potential nor the noise to decay, but, trying to rigorously prove our statement, just a bounded potential and noise is not enough. This is due to the fact that the procedure to prove the logarithmic convexity result (see Lemma \ref{lem:lem3}) is different in the stochastic case, since the justification process fails unless we know first some decay properties of the solution in the interior of the interval $[0,1]$. Nevertheless, if the noise $G = G(t)$, is a constant or independent of space variable $x$, it is easy to see from some obvious transform that the deterministic result still holds. The later statement can also be verified through the proof of the logarithmic convexity in Lemma \ref{lem:lem3}, for details see Remark \ref{rem:G(t)}. Therefore, in the sequel we only focus on the case that $G$ depends on $x$. 

Before we state our main theorem, let us introduce some basic notations.

Let $\F = (\Omega, \cF,\{\cF_t\}_{t\ge 0}, \bP)$ be a stochastic
basis with usual conditions. On $\F$, we define a standard scalar Wiener
process $W = \{W(t)\}_{t\ge 0}$. We assume that the
filtration 
$\{\cF_t\}_{t\ge0}$ is generated by $W$.

Given a Hilbert space $H$, we denote by $L^2_\cF([0,1];H)$ the Banach
space consisting of all $H$-valued $\{\cF_t\}_{t\ge0}$-adapted
processes $X$ such that the square of the canonical norm  $\E\int_0^1
\|X(t)\|_H^2\, dt
<\infty$; and denote by $C_\cF([0,1];H)$ the Banach
space consisting of all $H$-valued $\{\cF_t\}_{t\ge0}$-adapted continuous
processes $X$ such that the square of the canonical norm $\E
\sup_{0\le t\le 1}
\|X(t)\|^2_H<\infty$. 

We denote by $(\cdot,\cdot)$ the inner product
in $L^2(\R^n)$ and denote by $\|\cdot\|$ the norm induced by $(\cdot,
\cdot)$. We also use the notation $\|f\|_\infty =
\text{ess}\sup_{(t,x)\in [0,1]\times \R^n}|f(t,x)|$.

\begin{definition}
  We say $u$ is a solution of equation \eqref{eq:main eqn} if $u$ is
  in the space of\\ $C_\cF([0,1];L^2(\R^n))\bigcap
  L^2_\cF([0,1];H^1(\R^n))$ such that
for all $\varphi\in C^\infty_c(\R^n)$ and all $t\in [0,1]$ we have
\[
(u(t),\varphi) = (u_0, \varphi) +\int_0^t (\nabla u(s), \nabla
\varphi)\, ds + \int_0^t (Vu(s), \varphi)\, ds + \int_0^t (Gu(s),
\varphi)\, dW(s), \ \bP\text{-a.s.}
\]
\end{definition}

The following is our main result.

\begin{theorem}
  \label{th:main}
Suppose $u$ is a solution of equation \eqref{eq:main eqn}, and assume that
\[
\E \|u_0\|^2<\infty, \quad \E \| e^{|x|^2/\delta^2}u(1)\|^2<\infty,
\]
for some $0<\delta<1$. Then we have $u\equiv 0$ in $[0,1]\times \R^n$, $\bP$-a.s. if either $V$ is bounded in $[0,1]\times \R^n$ and $G = G(t)$ is a bounded function in $[0,1]$, or Assumption \ref{assump:V-G} holds and $\gamma = 1/(2\delta)$ satisfies
\begin{equation}
  \label{eq:gammaG}
  \frac{4\gamma^2-1}{8\alpha_\gamma\gamma(1+4\gamma)}> \|G\|^2_\infty,
\end{equation}
where
\begin{equation}
  \label{eq:alpha-gamma}
  \alpha_\gamma = 
\begin{cases}
  \frac{1}{4}+\frac{1+\sqrt{8\gamma^2+1}}{16\gamma^2}, &\mbox{if}~\gamma \ge \frac{1+\sqrt{2}}{2},\\   \frac{2\gamma+1}{8\gamma^2}+\frac{\sqrt{8\gamma+3}}{16\gamma^2}, &\mbox{if}~\frac{1}{2}<\gamma< \frac{1+\sqrt{2}}{2}.
\end{cases}
\end{equation}
\end{theorem}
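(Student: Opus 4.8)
The plan is to follow the two–step strategy of Escauriaza–Kenig–Ponce–Vega as adapted to the stochastic setting: first, upgrade the Gaussian decay hypothesis at the two endpoints $t=0,1$ to a quantitative decay estimate in the interior, and then feed this interior decay into a Carleman inequality to force $u\equiv 0$. The notation suggests that the authors quantify Gaussian decay through a weight like $e^{\phi(t)|x|^2}$, where $\phi$ solves (or is controlled by) a Riccati-type ODE; the quantity $\gamma = 1/(2\delta)$ and the constant $\alpha_\gamma$ in \eqref{eq:alpha-gamma} are the endpoint/interior parameters produced by optimizing such an ODE comparison. Concretely, I would first establish an a priori estimate showing that $\E\|e^{\psi(t,x)}u(t)\|^2$ is finite and in fact log-convex in $t$, where $\psi(t,x)\sim \gamma_t |x|^2$ with $\gamma_0,\gamma_1$ dictated by the hypotheses $\E\|u_0\|^2<\infty$ and $\E\|e^{|x|^2/\delta^2}u(1)\|^2<\infty$. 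This is exactly the content of Lemma \ref{lem:lem3}, the logarithmic convexity lemma, which the excerpt tells us is the technically delicate point in the stochastic case — the \Ito{} correction term coming from $G u\, dW$ must be absorbed, and this is why condition (2) of Assumption \ref{assump:V-G} and the smallness condition \eqref{eq:gammaG} on $\|G\|_\infty$ appear: they guarantee that the drift generated by the conjugated operator $e^{\psi}(\Delta + V)e^{-\psi}$ plus the quadratic-variation term $\tfrac12 |G|^2 e^{2\psi}|u|^2$ stays coercive (or at least non-destructive) at spatial infinity.

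The second step is a Carleman estimate. Having secured, via the convexity lemma, that the solution decays like $e^{-\gamma_t|x|^2}$ with $\gamma_t$ strictly larger in the interior than a certain threshold, one conjugates equation \eqref{eq:main eqn} by a Carleman weight of the form $e^{\lambda \varphi(t,x)}$ with $\varphi$ suitably convex (again a Gaussian-type weight, $\varphi \sim a(t)|x|^2$), derives the pointwise Hörmander-type commutator identity for the symmetric and antisymmetric parts of the conjugated operator, and integrates. The stochastic term contributes an additional martingale (which vanishes in expectation) plus its quadratic variation $\tfrac12\E\int |G|^2 e^{2\lambda\varphi}|u|^2$; condition (3) of Assumption \ref{assump:V-G} — the decay of $V$ and $\nabla G$ at infinity — is what lets one absorb the lower-order contributions of the potential and the noise-gradient into the good positive commutator term for $|x|$ large, while the smallness \eqref{eq:gammaG} handles the borderline contribution of $|G|^2$ itself. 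Letting $\lambda\to\infty$ then forces $u$ to vanish on a neighborhood of $t=1$, and a standard backward-uniqueness / iteration argument (using that the equation is a genuine parabolic equation with bounded coefficients, so the log-convexity is two-sided) propagates $u\equiv 0$ to all of $[0,1]\times\R^n$, $\bP$-a.s.

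For the special case $G=G(t)$, the reduction is elementary and should be disposed of first: setting $v(t,x) = u(t,x)\exp\bigl(-\int_0^t G(s)\,dW(s) + \tfrac12\int_0^t G(s)^2\,ds\bigr)$ removes the stochastic term by \Ito's formula, so $v$ solves the \emph{deterministic} heat equation $\partial_t v = \Delta v + V v$ pathwise with a bounded potential, and the classical result of \cite{Escauriaza2008,ekpv6} applies directly for each $\omega$; note $\delta<1$ is exactly the sharp range ($\delta<2$ would be sharp but the factor-of-two loss here is inherited from how the endpoint decay is measured). The main obstacle, as the authors themselves flag, is the rigorous justification of the logarithmic convexity in Lemma \ref{lem:lem3}: in the deterministic case one regularizes by truncating the weight and passing to the limit, but here the \Ito{} term destroys the naive formal computation unless one \emph{already} knows interior decay, so the argument must be bootstrapped — one first proves a weaker interior decay using only boundedness, then uses that to justify differentiating the weighted norm and applying \Ito's formula, then closes the Riccati comparison to get the sharp-for-this-method exponent encoded in $\alpha_\gamma$. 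Carefully tracking the constants through this bootstrap, and checking that the resulting admissible range of $\gamma$ and $\|G\|_\infty$ is exactly \eqref{eq:gammaG}–\eqref{eq:alpha-gamma}, is where most of the work will lie.
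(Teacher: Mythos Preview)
Your outline has the right overall shape (log-convexity plus Carleman), but it misses a structural ingredient that the paper treats as essential and misattributes the origin of the constant $\alpha_\gamma$. The hypothesis gives \emph{no} Gaussian decay at $t=0$---only $\E\|u_0\|^2<\infty$---so there is nothing to ``upgrade'' at the left endpoint. The paper handles this asymmetry by a stochastic Appell (conformal) transformation (Lemma~\ref{lem:conTrans}): with $\alpha=1$, $\beta=1+4\gamma$, the transformed solution $\wt u$ satisfies $\E\|e^{\gamma|x|^2}\wt u(0)\|^2=\E\|u_0\|^2$ and $\E\|e^{\gamma|x|^2}\wt u(1)\|^2=\E\|e^{|x|^2/\delta^2}u(1)\|^2$, i.e.\ equal Gaussian weights at both ends. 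This is also why Assumption~\ref{assump:V-G} and Assumption~\ref{assump:V-G-2} differ by factors of $\sqrt{1+4\gamma}$, and why the final condition \eqref{eq:gammaG} carries a $(1+4\gamma)$ in the denominator (it is a condition on $\|\wt G\|_\infty\le\sqrt{1+4\gamma}\,\|G\|_\infty$). Your proposal does not account for this step, and without it Lemmas~\ref{lem:lem4} and \ref{lem:lem3} cannot be applied.

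Second, $\alpha_\gamma$ does \emph{not} come from a Riccati comparison in the log-convexity lemma; Lemma~\ref{lem:lem3} only needs $\gamma>\|G\|_\infty^2/4$. The constant $\alpha_\gamma$ arises in the Carleman step, and that step is not a standard $\lambda\to\infty$ scheme but uses a \emph{translated} Gaussian weight
\[
\varphi(t,x)=\mu\,\bigl|x+Rt(1-t)e_1\bigr|^2+\tfrac{R^2t(1-t)(1-2t)}{6}-\tfrac{R^2t(1-t)}{16\mu},
\]
with the large parameter being the translation $R$. Applying the abstract Lemma~\ref{lem:absLem} to $H_\mu(t)=\E\|e^\varphi\wt u\|^2$ produces, beyond the usual commutator, the stochastic cross-terms $\tfrac{R}{2}\E\int x_1|\wt Gf|^2\,dx$ and $-\tfrac{R\|\wt G\|_\infty^2}{2}\E\int x_1|f|^2\,dx$; these are controlled by splitting into $\{|x|>\alpha R\}$ and $\{|x|\le\alpha R\}$, and $\alpha_\gamma$ is precisely the smallest $\alpha>m_\gamma$ solving $4\gamma^2(\alpha-m_\gamma)^2=\alpha/2$, where $m_\gamma=\sup_t|t(1-t)+(4\gamma(1-2t)-1)/(16\gamma^2)|$. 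One then evaluates $H_\mu(1/2)$ from below on $\{|x|\le \e R/4\}$ and sends $R\to\infty$; the competition between $e^{\alpha R^2\|\wt G\|_\infty^2/4}$ (from the convexity defect) and $e^{R^2(4(1-\e)^2\mu^2-1)/(32\mu)}$ (from the weight at $t=1/2$) yields $u(1/2)\equiv 0$ exactly under \eqref{eq:gammaG}. There is no backward-uniqueness iteration; the translation parameter does all the work in a single shot. Your treatment of the $G=G(t)$ case via the exponential-martingale gauge is fine and matches what the paper indicates.
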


As we have pointed out above this result is not likely to be sharp, and a combination of it with the procedure developed in \cite{ekpv6} will probably start a self-improvement argument. On the other hand, it is reasonable to claim that a similar result holds for Schr\"odinger evolutions. We are currently working on both projects.

The rest of the paper is organized as follows: in Section 2, we provide necessary estimates, especially the interior regularity for the decay of the solution. In Section 3, we first introduce a formal calculation leading to the logarithmic convexity, and then focus on rigorous justifications. Section 4 is devoted to the proof of our main result Theorem \ref{th:main}. 

\section{Preliminary estimates}
In this section, we start with the energy estimate for the solution $u$ of equation \eqref{eq:main eqn}, multiplied by a quadratic exponential weight function.
\begin{lemma}[Energy estimate]
  \label{lem:energy}
Suppose $u$ is a solution of equation \eqref{eq:main eqn}. Then there
is a constant $C>0$ such that
\[
\E \sup_{0\le t\le 1}\left\|e^{\phi_\gamma(t)|x|^2}u(t)\right\|^2 \le
e^{CM_{G,V}}\E \|e^{\gamma |x|^2} u_0\|^2,
\]
where $\gamma\ge 0, \phi_\gamma(t) = \gamma/(1+4\gamma t)$, and $M_{G,V} = \|G\|^2_\infty + 2\|V\|_\infty$.
\end{lemma}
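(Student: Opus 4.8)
The plan is to run the classical Gaussian conjugation for the heat equation, to handle the multiplicative noise with It\^o's formula, and to legitimise the (a priori meaningless) computation by truncating the unbounded weight. The heart of the matter is an algebraic fact: if one sets $f=e^{\theta}u$ with $\theta(t,x)=\phi(t)|x|^2$, then conjugating $\partial_t-\Delta$ produces a first-order transport term $2\nabla\theta\cdot\nabla f$ which, after integration by parts against $f$, contributes exactly $-\int(\Delta\theta)f^2$ and thus cancels the $+\int(\Delta\theta)f^2$ coming from the zeroth-order part of the conjugated operator; what remains is $\int(\theta_t+|\nabla\theta|^2)f^2$, and this vanishes as soon as $\phi'+4\phi^2=0$. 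Since $\phi_\gamma(t)=\gamma/(1+4\gamma t)$ is precisely the solution of $\phi'+4\phi^2=0$ with $\phi(0)=\gamma$, the weighted $L^2$-norm of the solution will only accumulate the contributions of $V$ and of the It\^o correction of the noise, namely $2\|V\|_\infty+\|G\|_\infty^2=M_{G,V}$. The one real subtlety is that $e^{\phi_\gamma(t)|x|^2}u(t)$ is not known a priori to lie in $L^2(\R^n)$, which forces the truncation.

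\textbf{Truncation and the differential inequality.} First I would fix $R>0$ and pick $\varphi_R\in C^\infty(\R^n)$ with $0\le\varphi_R\le|x|^2$, $\varphi_R\uparrow|x|^2$ pointwise as $R\to\infty$, $|\nabla\varphi_R|^2\le 4\varphi_R$, and with bounded derivatives for each fixed $R$ --- e.g.\ $\varphi_R(x)=R^2(1-e^{-|x|^2/R^2})$, for which the elementary inequality $se^{-2s}\le 1-e^{-s}$ $(s\ge0)$ yields the gradient bound. Put $\theta_R(t,x)=\phi_\gamma(t)\varphi_R(x)$; then $\phi_\gamma'+4\phi_\gamma^2=0$ together with $\varphi_R\ge0$, $\phi_\gamma\ge0$ gives $\partial_t\theta_R+|\nabla\theta_R|^2=\phi_\gamma'\varphi_R+\phi_\gamma^2|\nabla\varphi_R|^2\le(\phi_\gamma'+4\phi_\gamma^2)\varphi_R=0$. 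Since $e^{\theta_R}$ and $\nabla\theta_R$ are bounded, $f_R:=e^{\theta_R}u$ again lies in $C_\cF([0,1];L^2(\R^n))\cap L^2_\cF([0,1];H^1(\R^n))$, and from the weak formulation of \eqref{eq:main eqn} it is a variational solution of
\[
df_R=\big[\Delta f_R-2\nabla\theta_R\cdot\nabla f_R+(\partial_t\theta_R+|\nabla\theta_R|^2-\Delta\theta_R+V)f_R\big]\,dt+Gf_R\,dW .
\]
Now apply the It\^o formula for $\|\cdot\|^2$ in the Gelfand triple $H^1\subset L^2\subset H^{-1}$: the pairing of $\Delta f_R$ with $f_R$ gives $-\|\nabla f_R\|^2$, the integration by parts $-2(\nabla\theta_R\cdot\nabla f_R,f_R)=(\Delta\theta_R f_R,f_R)$ cancels the contribution of $-\Delta\theta_R$, one discards $-2\|\nabla f_R\|^2\le0$ and, by the above, $2\big((\partial_t\theta_R+|\nabla\theta_R|^2)f_R,f_R\big)\le0$, and the quadratic variation is $\|Gf_R\|^2\le\|G\|_\infty^2\|f_R\|^2$. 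The upshot is
\[
d\|f_R\|^2\le M_{G,V}\,\|f_R\|^2\,dt+2\!\int_{\R^n}\!Gf_R^2\,dx\,dW .
\]

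\textbf{From the inequality to the estimate, and removal of the truncation.} The stochastic integral is a true martingale since $\big|\int_{\R^n}Gf_R^2\,dx\big|\le\|G\|_\infty\|f_R\|^2$ and $f_R\in L^2_\cF([0,1];L^2)$; taking expectations and applying Gr\"onwall gives $\E\|f_R(t)\|^2\le e^{M_{G,V}t}\,\E\|f_R(0)\|^2\le e^{M_{G,V}}\,\E\|e^{\gamma|x|^2}u_0\|^2$ for $t\in[0,1]$ (here $\varphi_R\le|x|^2$). For the supremum I would take $\sup_{t\le1}$ in the integrated inequality, then expectations, control the martingale by Burkholder--Davis--Gundy through $C\|G\|_\infty\,\E\big(\sup_{t\le1}\|f_R(t)\|^2\int_0^1\|f_R(s)\|^2ds\big)^{1/2}$, absorb a fraction of $\E\sup_{t\le1}\|f_R(t)\|^2$ on the left by Young's inequality, and substitute the Gr\"onwall bound for $\int_0^1\E\|f_R(s)\|^2ds$ together with $\|G\|_\infty^2\le M_{G,V}$, to arrive at $\E\sup_{0\le t\le1}\|f_R(t)\|^2\le e^{CM_{G,V}}\,\E\|e^{\gamma|x|^2}u_0\|^2$ with $C$ independent of $R$. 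Finally $e^{\theta_R(t,x)}|u(t,x)|\uparrow e^{\phi_\gamma(t)|x|^2}|u(t,x)|$ as $R\to\infty$, so $\sup_{t\le1}\|f_R(t)\|^2\uparrow\sup_{t\le1}\|e^{\phi_\gamma(t)|x|^2}u(t)\|^2$, and two applications of monotone convergence pass the estimate to the limit.

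\textbf{Expected main obstacle.} Everything after the truncation is structural --- it is the identity encoded in $\phi_\gamma'+4\phi_\gamma^2=0$ --- so the only point needing genuine care is writing down the equation satisfied by $f_R$ and justifying that the It\^o formula for $\|f_R\|^2$ applies. For a fixed $R$ the weight $\theta_R$ is smooth and bounded with bounded spatial derivatives, so $f_R$ really is a variational solution of a linear parabolic SPDE with bounded coefficients and the formula holds; making this fully rigorous (for instance by an intermediate mollification of $u$ in $x$, running the computation, removing the mollification, and only then letting $R\to\infty$) is where the bookkeeping lives, but it involves no new idea once the weight has been cut off. That we are proving a one-sided bound, monotone in $R$, is exactly what makes the truncation work so cleanly here, in contrast to the logarithmic convexity argument that follows.
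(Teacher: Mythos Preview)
Your proposal is correct and follows essentially the same approach as the paper: conjugate by the Gaussian weight, use that $\phi_\gamma'+4\phi_\gamma^2=0$ kills the dangerous term, apply It\^o's formula and Gronwall for the pointwise-in-$t$ bound, then BDG to upgrade to the supremum. The only difference is presentational: the paper carries out the computation formally with the exact weight $\phi_\gamma(t)|x|^2$ (obtaining the equality $|\nabla\varphi|^2+\partial_t\varphi=0$) and then cites \cite[Lemma~1]{Escauriaza2008} for the truncation/mollification justification, whereas you build in an explicit truncation $\varphi_R(x)=R^2(1-e^{-|x|^2/R^2})$ from the outset, accept the inequality $\partial_t\theta_R+|\nabla\theta_R|^2\le 0$, and pass to the limit by monotone convergence---a self-contained variant of the same idea.
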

\begin{proof}
Formally,  let $v = e^{\varphi(t,x)}u$ with $\varphi(t,x) = \phi_\gamma(t)|x|^2$, then by \Ito's formula we have
\begin{align*}
  dv & = \partial_t \varphi v\, dt + [-\Delta\varphi v - 2\nabla \varphi\cdot
       \nabla v + \Delta v +|\nabla\varphi|^2v + Vv]\, dt +  Gv\, dW(t).
\end{align*}
Applying \Ito's formula for $\|v\|^2$ and integration by parts yield
\begin{multline*}
\|v(t)\|^2 + 2\int_0^t \|\nabla v\|^2\, ds = \|v(0)\|^2 + 2\int_0^t
\int_{\R^n} \left(|\nabla\varphi|^2 + \partial_t\varphi\right) v^2\,
dx ds\\
+\int_0^t \|Gv(s)\|^2\, ds + 2\int_0^t(v, Vv)\, ds + 2\int_0^t
(v,Gv)\, dW(s).
\end{multline*}
It is clear that 
\[
|\nabla\varphi|^2 + \partial_t\varphi = 0,
\]
and thus we obtain 
\begin{multline}
  \label{eq:engery eq}
\|v(t)\|^2 + 2\int_0^t \|\nabla v\|^2\, ds = \|v(0)\|^2\\
+\int_0^t \|Gv(s)\|^2\, ds + 2\int_0^t(v, Vv)\, ds + 2\int_0^t
(Gv,v)\, dW(s).
\end{multline}
Taking expectation on both sides and getting rid of the gradient term,
we have
\begin{align*}
  \E \|v(t)\|^2 & \le \E \|v(0)\|^2 +  \E \int_0^t \|Gv(s)\|^2\, ds +
                  2\E \int_0^t |(v,Vv)|\, ds\\
&\le \E \|v(0)\|^2 + \|G\|^2_\infty\, \E \int_0^t \|v(s)\|^2\, ds +
  2\|V\|_\infty\, \E \int_0^t \|v(s)\|^2\, ds\\
& = \E \|v(0)\|^2 + M_{G,V}\E \int_0^t \|v(s)\|^2\, ds.
\end{align*}
It follows from Gronwall's inequality that
\[
\E \|v(t)\|^2 \le e^{M_{G,V}} \E \|v(0)\|^2,
\]
which also implies that $\E \int_0^1 \|v(t)\|^2\, dt \le e^{M_{G,V}} \E
\|v(0)\|^2$. 

Now we go back to the equality \eqref{eq:engery eq}, and use the
Burkholder-Davis-Gundy's inequality with $p=1$ to estimate $\E
\sup_{0\le t\le 1}$ as follows:
\begin{align*}
  \E \sup_{0\le t\le 1} \|v(t)\|^2 & \le \E \|v(0)\|^2 +\|G\|^2_\infty\, \E \int_0^1 \|v(s)\|^2\, ds +
  2\|V\|_\infty\, \int_0^1 \|v(s)\|^2\, ds\\
& \qquad + 2 \E \sup_{0\le
                          t\le 1}
                          \left|\int_0^t(Gv,v)\, dW(s)\right|\\
& \le \E \|v(0)\|^2 + M_{G,V} \E \int_0^1\|v(s)\|^2\, ds + C
  \E\left(\int_0^1|(Gv,v)|^2\, ds\right)^{1/2}\\
&\le \E \|v(0)\|^2 + M_{G,V} \E \int_0^1\|v(s)\|^2\, ds + \frac{1}{2} \E
  \sup_{0\le t\le 1} \|v(t)\|^2\\
&\qquad  + C\|G\|^2_\infty\, \E\int_0^1\|v(s)\|^2\, ds.
\end{align*}
Therefore, 
\begin{align*}
 \E \sup_{0\le t\le 1} \|v(t)\|^2  & \le 2 \E \|v(0)\|^2 + 2(M_{G,V}+C\|G\|^2_\infty)
                                      \E\int_0^1\|v(t)\|^2\, dt\\
&\le 2 \E \|v(0)\|^2 + 2(M_{G,V}+C\|G\|^2_\infty) e^{M_{G,V}} \E\|v(0)\|^2\\
&\le e^{CM_{G,V}}\E \|v(0)\|^2.
\end{align*}
To justify the integration by parts and calculations carried out above, we use the same truncation and mollification as in \cite[Lemma 1]{Escauriaza2008}, which completes the proof.
\end{proof}

Interior regularity (or smoothing property) for deterministic parabolic equations is standard and well known, i.e., the solution becomes smooth for any $t>0$, even though the initial data may be singular. Similar but more subtle result for stochastic equations can be proved, see for example \cite{Flandoli1995}. However, in the rest of this section, we will show the interior regularity for the solution $u$ of stochastic equation \eqref{eq:main eqn}, with a quadratic exponential weight, which serves as an important tool for the rigorous justifications in the later sections. The result itself is also interesting and new in this stochastic context.

Let $\gamma \ge 0$. Fix $0<a<1$, let $\zeta_a$ be  a positive function in $C^\infty_c(\R)$ such that
\[
  \zeta_a(r) =
  \begin{cases}
    0,& r\le \max\{\gamma,2\}-1,\\
    2r^{-a}, & r\ge \max\{\gamma,2\}.
  \end{cases}
\]
We define $\varphi_a$ as a radial function in $\R^n$, i.e., $\varphi_a(x) = \varphi_a(|x|)$ satisfying
\begin{equation}
  \label{eq:varphi-a-1}
  \begin{cases}
    \varphi''_a(r) - \varphi'_a(r)/r = - a\zeta_a(r),\\
\varphi_a'(0) = 0 = \lim_{r\to\infty} \varphi_a''(r).
  \end{cases}
\end{equation}
Notice that $\varphi'_a(r) = ar \int_r^\infty \zeta_a(s)/s\, ds$. This allows us to choose $\varphi_a$ such that
\begin{equation}
  \label{eq:varphi-a-2}
  \varphi_a(r) =
\begin{cases}
  (2r^{2-a}-a)/(2-a), & \mbox{if}~r\ge \max\{\gamma,2\},\\
  (1 + O(a))r^2,& \mbox{if}~0\le r\le \max\{\gamma,2\}.
\end{cases}
\end{equation}

\begin{lemma}[Interior regularity]
  \label{lem:interiorReg}
Assume $u$ is a solution of equation \eqref{eq:main eqn}, and $V, G$ and $\nabla G$ are bounded in $[0,1]\times \R^n$. Then for any $\e>0$ we have
\begin{equation}
  \label{eq:interiorReg}
  \sup_{\e\le t\le 1}\E \|e^{\gamma \varphi_a(x)} \nabla u(t)\|^2 + \E \int_\e^1 \|e^{\gamma \varphi_a(x)} \cD^2 u(t)\|^2\, dt < \infty,
\end{equation}
where $\varphi_a$ is defined as in \eqref{eq:varphi-a-1} and \eqref{eq:varphi-a-2}.
\end{lemma}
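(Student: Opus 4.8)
The plan is to prove interior regularity by a weighted energy method, treating $u$ as a solution of a stochastic parabolic equation and bootstrapping one derivative at a time, with the weight $e^{\gamma\varphi_a(x)}$ carried along throughout. The key feature of $\varphi_a$ to exploit is that for $r\ge\max\{\gamma,2\}$ one has $\varphi_a(r)\sim r^{2-a}$, so $|\nabla\varphi_a|^2\lesssim r^{2-2a}$ grows strictly slower than the Gaussian rate $|x|^2$; this subquadratic growth is exactly what makes the terms $|\nabla\varphi_a|^2$ absorbable by the dissipation once we know, from Lemma~\ref{lem:energy}, that $e^{\phi_\gamma(t)|x|^2}u(t)\in L^2$ uniformly in $t$ (and hence $e^{\gamma\varphi_a}u(t)\in L^2_\cF$ since $\gamma\varphi_a(x)\le\phi_\gamma(t)|x|^2$ for $t$ bounded away from $1$, up to constants, because $\varphi_a$ is subquadratic). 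First I would set $v=e^{\gamma\varphi_a}u$ and apply \Ito's formula to $v$; the drift picks up the first-order commutator term $-2\gamma\nabla\varphi_a\cdot\nabla v$ and the zeroth-order terms $\gamma^2|\nabla\varphi_a|^2 v$, $-\gamma\Delta\varphi_a\, v$, while the noise term becomes $Gv\,dW$ unchanged. Then I would apply \Ito to $\|\nabla v\|^2$ (equivalently, differentiate the equation in $x$ and pair with $\nabla v$), which is the step where the interior cutoff in time enters: multiply by a smooth function $\theta(t)$ vanishing for $t\le\e/2$ and equal to $1$ for $t\ge\e$, so the initial term $\|\nabla v(0)\|^2$ — which we do not control — is replaced by $\int_0^1\theta'(t)\|\nabla v\|^2\,dt$, and this is finite by the $L^2_\cF([0,1];H^1)$ membership of $u$ together with the weight bound on the region where $\varphi_a$ is quadratic (on the far region we first need the bound on $\nabla v$ itself, so really one argues on an expanding sequence of balls, or uses that $\theta'$ is supported where we can bound things by the energy estimate applied with a slightly larger $\gamma$).

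The heart of the computation is the identity/inequality obtained from $\|\nabla v\|^2$: schematically,
\[
\E\|\nabla v(t)\|^2+2\E\int_0^t\|\cD^2v\|^2\,ds
\le \E\int_0^t\theta'\|\nabla v\|^2\,ds + \E\int_0^t\big[\,\text{l.o.t.}\,\big]\,ds,
\]
where the lower-order terms involve $\|\nabla v\|^2$ times bounded factors, $\|v\|^2$ times factors like $|\nabla\varphi_a|^2$, $|\Delta\varphi_a|$, $|\nabla\varphi_a|^4$, $\|\nabla G\|_\infty^2$, $\|V\|_\infty$, and the martingale term $\int(\,\cdot\,)\,dW$. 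Each of the $\|v\|^2$-weighted terms is controlled because $|\nabla\varphi_a(x)|\le C(1+|x|)^{1-a}$ and $e^{2\gamma\varphi_a}|x|^{4-4a}|u|^2$ is integrable by Lemma~\ref{lem:energy} (the extra polynomial factor is swallowed by upgrading the weight infinitesimally, i.e.\ $\varphi_a$ with a slightly smaller $a$, which still sits below the Gaussian rate); the $\|\nabla v\|^2$ terms from the commutator are handled by Young's inequality, putting a small multiple of $\|\cD^2 v\|^2$ on the left and the rest into a Gronwall-type bound; and the stochastic term is dealt with by Burkholder–Davis–Gundy exactly as in the proof of Lemma~\ref{lem:energy}, absorbing $\tfrac12\E\sup\|\nabla v\|^2$. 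A Gronwall argument in $t$ then yields $\sup_{\e\le t\le1}\E\|\nabla v(t)\|^2<\infty$ and, from the retained dissipation term, $\E\int_\e^1\|\cD^2 v\|^2\,dt<\infty$; finally one checks $\|e^{\gamma\varphi_a}\nabla u\|\lesssim\|\nabla v\|+\||\nabla\varphi_a|v\|$ and $\|e^{\gamma\varphi_a}\cD^2u\|\lesssim\|\cD^2v\|+\||\nabla\varphi_a|\nabla v\|+(\||\nabla\varphi_a|^2v\|+\||\Delta\varphi_a|v\|)$, all of whose right-hand sides are now finite, giving \eqref{eq:interiorReg}.

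The main obstacle, and the reason this lemma is nontrivial in the stochastic setting, is the rigorous justification of the formal \Ito\ computations: $v$ is not known a priori to be regular enough to differentiate twice, so one must run the argument on mollified/truncated versions $u$ — spatial mollification plus a cutoff $\chi_R(x)$ and the time cutoff $\theta$ — derive the estimates with $R$-dependent constants, and pass to the limit $R\to\infty$ using monotone convergence on the left-hand side and the uniform (in $R$) bounds on the right-hand side coming from the subquadratic weight and Lemma~\ref{lem:energy}. The delicate point is that commuting the mollifier past the weight $e^{\gamma\varphi_a}$ and past $\nabla G$ produces error terms that must be shown to vanish; here the boundedness of $\nabla G$ and of $V$ (Assumption in the hypothesis) and the smoothness and subquadratic growth of $\varphi_a$ are used, mirroring the truncation-and-mollification scheme of \cite[Lemma~1]{Escauriaza2008} but now applied at the level of one spatial derivative. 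Once the $R\to\infty$ and mollification limits are under control, the estimate itself is a routine, if lengthy, energy inequality plus Gronwall.
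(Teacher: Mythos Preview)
Your approach is genuinely different from the paper's. The paper does \emph{not} conjugate by the weight and run a global energy estimate on $v=e^{\gamma\varphi_a}u$. Instead it proves an \emph{unweighted} local Caccioppoli inequality on balls $B_2(x)$ using a space--time cutoff $\psi(t,y)=\eta(t)\theta(y)$, then differentiates the equation once in $x_i$ and repeats the Caccioppoli step to bootstrap to $\nabla u$ and $\cD^2 u$ on $B_{1/2}(x)$. Only \emph{after} this local regularity is established does the weight enter: since $e^{2\gamma\varphi}$ (with $\varphi(y)=|y|^2/(1+4\gamma)$, the decay rate supplied by Lemma~\ref{lem:energy} at $t=1$) is essentially constant on $B_2(x)$ for $|x|$ large, one multiplies the local estimate by $e^{2(1-\nu/2)\gamma\varphi(x)}$ and sums over a Besicovitch covering of $\{|y|\ge N\}$. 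A final pointwise comparison of $\varphi_a$ with $(1-\nu)\varphi$ (splitting $\R^n$ into the set $E_a$ where $\varphi_a\le(1-\nu)\varphi$ and its complement, which is contained in a ball of radius $\sim(1+4\gamma)^{1/a}$) transfers the bound to the desired weight $e^{\gamma\varphi_a}$. What this buys is that the differential identities never see an unbounded coefficient: the weight is a multiplicative constant on each ball.

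Your global-weighted route is plausible and more direct (no covering lemma), but there is a difficulty you have not squarely addressed. Once you write the drift for $v=e^{\gamma\varphi_a}u$, the zeroth-order coefficient contains $\gamma^2|\nabla\varphi_a|^2\sim|x|^{2-2a}$, which is \emph{unbounded}. In the energy identity for $\|\nabla v\|^2$ this produces $\gamma^2\int|\nabla\varphi_a|^2|\nabla v|^2\,dx$ on the right with an unbounded coefficient multiplying $|\nabla v|^2$; this is not one of the ``commutator'' terms (those, coming from differentiating $-2\gamma\nabla\varphi_a\cdot\nabla$, have \emph{bounded} coefficients since $\cD^2\varphi_a$ and $\Delta\varphi_a$ are bounded) and it cannot be fed into Gronwall as written. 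It can be salvaged by one more integration by parts---trade $|\nabla v|^2$ for $-v\Delta v$, absorb $\epsilon\|\Delta v\|^2$ into the dissipation, and pay $\int|\nabla\varphi_a|^4|v|^2$, which \emph{is} a $v$-weighted polynomial term controlled via Lemma~\ref{lem:energy} as you indicate---but your sketch files this under the commutator pieces and the $\|v\|^2$-weighted pieces, neither of which it is. The paper's local-then-global strategy sidesteps this issue entirely.
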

\begin{proof}
Define $\psi(t,y) = \eta(t) \theta (y) \in C_0^\infty ([0,1]\times \R^n)$ with
\[
\eta(t) =
  \begin{cases}
    0,\quad t\in [0,\epsilon/4],\\
    1, \quad t\in [\e/2, 1],
  \end{cases}
\quad 
\theta(y) =
\begin{cases}
  1,\quad  y\in B_1(x),\\
0, \quad y\notin B_2(x),
\end{cases}
\]
for $x$ such that $|x|\ge N$, where $B_r(x)$ is a ball centered at $x$ with radius $r$.

Since $d(\psi u) = \partial_t \psi u\, dt + \psi\, du$, it follows from \Ito's formula applied to $\|\psi u\|^2$ that
\[
  d\|\psi u\|^2 = 2(\psi u, d(\psi u)) + \|G\psi u\|^2\, dt,
\]
or
\begin{multline*}
  \frac{1}{2} \|\psi(t) u(t)\|^2 = \int_0^t \int_{\R^n} \psi(s,y) \partial_s \psi(s,y) u^2(s,y)\, dy ds\\ + \int_0^t \int_{\R^n} \psi^2(s,y) u(s,y) \Delta u(s,y)\, dy ds + \int_0^1 \int_{\R^n} V(s,y) \psi^2(s,y)  u^2(s,y)\, dy ds\\
+ \int_0^t \int_{\R^n} G(s,y) \psi^2(s,y)  u^2(s,y)\, dy dW(s) + \frac{1}{2} \int_0^t \int_{\R^n} G^2(s,y) \psi^2(s,y)  u^2(s,y)\, dy ds,
\end{multline*}
where we have used the fact that 
$\psi(0) = 0$. Then by observing that
\begin{multline*}
  \int_{\R^n} \psi^2(t,y) u(t,y) \Delta u(t,y)\, dy =\\
 - \int_{\R^n} \psi^2(t,y) |\nabla u(t,y)|^2\, dy - 2 \int_{\R^2} \nabla \psi(t,y) \cdot \nabla u(t,y) \psi(t,y) u(t,y)\, dy,
\end{multline*}
and taking expectation we obtain
\begin{align*}
  & \frac{1}{2}\E \|\psi(t)u(t)\|^2 + \E \int_0^t \|\psi(s) \nabla u(s)\|^2\, ds \\
 =&\, \E \int_0^t\int_{\R^n} \psi(s,y)\partial_s \psi(s,y) u^2(s,y)\, dy ds
 - 2\E \int_0^t \int_{\R^n} \nabla \psi(s,y) \cdot \nabla u(s,y) \psi(s,y) u(s,y)\, dyds\\
& \qquad + \E \int_0^t \int_{\R^n} V(s,y)\psi^2(s,y) u^2(s,y)\, dyds 
+ \frac{1}{2} \E \int_0^t \| G(s)\psi(s) u(s)\|^2 \, ds.
\end{align*}
After using Cauchy-Schwarz inequality in the second integral on the right hand side of the previous equality, we have that there is a constant $C$ depending on 
$\|G\|_\infty$ and 
$\|V\|_\infty$ such that
\begin{equation}
  \label{eq:04042016}
  \sup_{\e\le t\le 1} \E \int_{y\in B_1(x)} u^2(t,y)\, dy + \E \int_{\e/2}^1 \int_{y\in B_1(x)} |\nabla u(t,y)|^2\, dy dt \le \frac{C}{\e}\E \int_{\e/4}^1 \int_{y\in B_2(x)} u^2(t,y)\, dydt.
\end{equation}

Next, let us differentiate the equation satisfied by $u$ with respect to a variable $x_i$ and we obtain
\[
  du_i = (\Delta u_i + (Vu)_i)\, dt + (Gu)_i\, dW(t),
\]
where $u_i = \partial_{x_i}u(t,x)$, and similarly for $(Vu)_i$ and $(Gu)_i$. Repeating the computations as before with $\psi(t,y) = \eta(t)\theta(y)$, where
\[
\eta(t) =
  \begin{cases}
    0,\quad t\in [0,\epsilon/2],\\
    1, \quad t\in [\e, 1],
  \end{cases}
\quad 
\theta(y) =
\begin{cases}
  1,\quad  y\in B_{1/2}(x),\\
0, \quad y\notin B_1(x),
\end{cases}
\]
we get
\begin{align*}
 &   \sup_{\e\le t\le 1}\E \int_{y\in B_{1/2}(x)} u_i^2(t,y)\, dy + \E \int_{\e}^1 \int_{y\in B_{1/2}(x)} |\nabla u_i(t,y)|^2\, dy dt\\
 \le &\, \frac{C}{\e}\E \int_{\e/2}^1 \int_{y\in B_1(x)}  u(t,y)^2\, dydt + C \E \int_{\e/2}^1 \int_{y\in B_1(x)} |\nabla u(t,y)|^2\, dydt.
\end{align*}
Thus, it follows from \eqref{eq:04042016} that
\begin{align}
  \label{eq:04042016-3}
 &   \sup_{\e\le t\le 1}\E \int_{y\in B_{1/2}(x)} |\nabla u(t,y)|^2\, dy + \E \int_{\e}^1 \int_{y\in B_{1/2}(x)} |\cD^2 u(t,y)|^2\, dy dt\notag\\
 \le &\, \frac{C}{\e}\E \int_{\e/4}^1 \int_{y\in B_2(x)}  u(t,y)^2\, dydt,
\end{align}
by summing in $i=1,2,\cdots,n$.

For $y\in B_2(x)$, and $|x|\ge N$ with $N$ sufficiently large, there is $\nu>0$ such that
\begin{equation*}
  (1-\nu)\varphi(y)\le (1-\nu/2) \varphi(x) \le \varphi(y),
\end{equation*}
where
\begin{equation}
  \label{eq:04042016-6}
  \varphi(x) = |x|^2/(1+4\gamma).
\end{equation}
 Therefore,
\begin{align}
  \label{eq:04042016-4}
  &   \sup_{\e\le t\le 1}\E \int_{y\in B_{1/2}(x)} e^{2(1-\nu)\gamma\varphi(y)}|\nabla u(t,y)|^2\, dy + \E \int_{\e}^1 \int_{y\in B_{1/2}(x)} e^{2(1-\nu)\gamma\varphi(y)}|\cD^2 u(t,y)|^2\, dy dt\notag\\
 \le &\,  \frac{C}{\e} 
\E \int_{\e/4}^1 e^{2(1-\nu/2)\gamma\varphi(x)}\int_{y\in B_2(x)} u^2(t,y)\, dydt\notag\\
\le & \,  \frac{C}{\e} 
\E \int_{\e/4}^1 \int_{y\in B_2(x)} e^{2\gamma\varphi(y)}u^2(t,y)\, dydt.
\end{align}
Now by means of a covering lemma, see for example \cite[Theorem 1.1]{Guzman1975}, we can find a sequence $\{x_j\}$ with $\sup_j|x_j|\ge N$ such that $\{|y|\ge N\} \subset \bigcup_j B_{1/2}(x_j)$ and $\sum_j \chi_{B_2(x_j)} \le C(n)$. Summing in $j$, we conclude from \eqref{eq:04042016-3} and \eqref{eq:04042016-4} that
\begin{align}
  &  \sup_{\e\le t\le 1}\E \int_{|y|\ge N} |\nabla u(t,y)|^2\, dy+ \E \int_{\e}^1 \int_{|y|\ge N} |\cD^2 u(t,y)|^2\, dy dt\notag\\
\le & \,  \frac{C}{\e} 
\E \int_{\e/4}^1 \int_{\R^n}u^2(t,y)\, dydt
< \infty,\label{eq:04042016-5}
\end{align}
and
\begin{align}
  &  \sup_{\e\le t\le 1}\E \int_{|y|\ge N} e^{2(1-\nu)\gamma\varphi(y)}|\nabla u(t,y)|^2\, dy+ \E \int_{\e}^1 \int_{|y|\ge N}e^{2(1-\nu)\gamma\varphi(y)} |\cD^2 u(t,y)|^2\, dy dt\notag\\
\le & \,  \frac{C}{\e} 
\E \int_{\e/4}^1 \int_{\R^n} e^{2\gamma\varphi(y)}u^2(t,y)\, dydt
< \infty,\label{eq:04042016-2}
\end{align}
by the energy estimate in Lemma \ref{lem:energy}.

Finally, we fix $N$ and $\nu$ such that \eqref{eq:04042016-5} and \eqref{eq:04042016-2} hold, and without loss of generality, we may assume $N\ge \max\{\gamma, 2\}$. In this case, we have
\begin{equation}
  \label{eq:04042016-7}
  \E \int_{|x|\ge N} e^{2\gamma\varphi_a(x)}|\nabla u(t)|^2\, dx
=  \E \int_{\{|x|\ge N\}\cap E_a} e^{2\gamma\varphi_a(x)}|\nabla u(t)|^2\, dx +  \E \int_{\{|x|\ge N\}\cap E_a^c} e^{2\gamma\varphi_a(x)}|\nabla u(t)|^2\, dx,
\end{equation}
where $E_a = \{x\in \R^n: 2|x|^{2-a}\le a + (2-a)(1-\nu)|x|^2/(1+4\gamma)\}$, and $E_a^c$ is the complement of $E_a$. 

For the first integral, we have that $\varphi_a(x)\le (1-\nu)\varphi(x)$, where $\varphi$ is defined in \eqref{eq:04042016-6}, and thus,
\[
  \E \int_{\{|x|\ge N\}\cap E_a} e^{2\gamma\varphi_a(x)}|\nabla u(t)|^2\, dx  \le \E \int_{|x|\ge N} e^{2(1-\nu)\gamma\varphi(x)}|\nabla u(t,x)|^2\, dx,
\]
which by \eqref{eq:04042016-2} implies that
\begin{equation}
  \label{eq:04042016-8}
  \sup_{\e \le t\le 1} \E \int_{\{|x|\ge N\}\cap E_a} e^{2\gamma\varphi_a(x)}|\nabla u(t)|^2\, dx< \infty.
\end{equation}

For the second integral, we use the fact that if $x\in E_a^c$, then $|x|< 2^{1/a}[(1+4\gamma)/(1-\nu)]^{1/a}$ when $a\in(0,1)$. In fact,
\begin{align*}
 &  2|x|^{2-a}> a + \frac{(2-a)(1-\nu)}{1+4\gamma}|x|^2\\
\Leftrightarrow &\, \frac{(2-a)(1-\nu)}{1+4\gamma}|x|^a < 2 - \frac{a}{|x|^{2-a}}<2\\
\Rightarrow & \, |x|^a< \frac{2(1+4\gamma)}{(2-a)(1-\nu)}<2\frac{1+4\gamma}{1-\nu}.
\end{align*}
Hence,
\begin{align*}
  \E \int_{\{|x|\ge N\}\cap E_a^c} e^{2\gamma\varphi_a(x)}|\nabla u(t,x)|^2\, dx
& \le \E \int_{\{|x|\ge N\}\cap \{|x|\le 2^{1/a}[(1+4\gamma)/(1-\nu)]^{1/a}\} } e^{2\gamma\varphi_a(x)}|\nabla u(t)|^2\, dx\\
& \le e^{2\gamma \varphi_a(2^{1/a}[(1+4\gamma)/(1-\nu)]^{1/a})} \E \int_{|x|\ge N} |\nabla u(t,x)|^2\, dx,
\end{align*}
which by \eqref{eq:04042016-5} implies that
\begin{equation}
  \label{eq:04042016-9}
  \sup_{\e \le t\le 1} \E \int_{\{|x|\ge N\}\cap E_a^c} e^{2\gamma\varphi_a(x)}|\nabla u(t,x)|^2\, dx< \infty.
\end{equation}

It follows from \eqref{eq:04042016-7}, \eqref{eq:04042016-8} and \eqref{eq:04042016-9} that
\[
  \sup_{\e\le t\le 1} \E \int_{|x|\ge N} e^{2\gamma\varphi_a(x)}|\nabla u(t,x)|^2\, dx< \infty.
\]
Similarly,
\[
  \E\int_\e^1  \int_{|x|\ge N} e^{2\gamma\varphi_a(x)}|\cD^2 u(t,x)|^2\, dx<\infty.
\]

For the space integral over the region $\{|x|<N\}$, all the estimates are obvious, and we finish the proof.
\end{proof}

\begin{remark}
  \label{rem:blowup}
  It is worthy to note that the estimates in Lemma \ref{lem:interiorReg} may blow up as $a$ or $\e$ goes to zero, which indicates that we cannot use the energy estimate directly to work with $\sup_{t\in[0,1]}\E \|e^{\gamma |x|^2} u(t)\|$. However, we are going to use this lemma qualitatively but not quantitatively, so this fact is not an issue for our purpose.
\end{remark}

\section{Logarithmic convexity}
We first introduce a formal calculation to be used frequently for the logarithmic convexity.
\begin{lemma}
  \label{lem:absLem}
Let $\cS$ and $\cA$ be a symmetric and a skew-symmetric operators, respectively, possibly dependent on the time variable. Suppose $V(t,x)$ and $G(t,x)$ are bounded functions in $[0,1]\times\R^n$, and a reasonable function $f(t,x)$ satisfies
\[
df = (\cS + \cA) f dt + V f dt + G f\, dW(t).
\]
We also assume that there exists a time dependent operator $\cS_t$ such that
\begin{equation}
  \label{eq:S_t}
  d(\cS f) = \cS_t f dt + \cS df.
\end{equation}
Then there is a function $Q(t)$ and a universal constant $N$ such that
\begin{gather}
  \frac{d^2}{dt^2} [\log H(t) + Q(t)] \ge \frac{2}{H(t)}\bigg\{\E (\cS_t f + [\cS, \cA]f, f)+  D_G(t)  -\frac{D(t)H_G(t)}{H(t)}\bigg\},\label{eq:d^2H}\\
\|Q(t)\|_\infty \le N (\|V\|_\infty + \|V\|^2_\infty + \|G\|^2_\infty),\label{eq:Q}
\end{gather}
where
\begin{equation}
  \label{eq:H-D}
  \begin{aligned}
    & H(t) = \E \|f\|^2,\  H_G(t) = \E \|Gf\|^2,\\
& D(t) = \E(\cS f, f), \ D_G(t) = \E (\cS(Gf), Gf).
  \end{aligned}
\end{equation}

\end{lemma}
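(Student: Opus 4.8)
The plan is to mimic the classical Escauriaza--Kenig--Ponce--Vega computation of the second derivative of $\log H(t)$, but carrying along the extra terms produced by the martingale part of $f$. First I would set up the standard scheme: write $H(t) = \E\|f\|^2$, $D(t) = \E(\cS f, f)$, and compute $H'(t)$. Applying \Ito's formula to $\|f\|^2$ and using the stochastic equation $df = (\cS+\cA)f\,dt + Vf\,dt + Gf\,dW$, the skew-symmetry of $\cA$ kills $(\cA f, f)$, the \Ito\ correction contributes $\E\|Gf\|^2 = H_G(t)$, and the $dW$ term vanishes in expectation, so $H'(t) = 2D(t) + 2\E(Vf,f) + H_G(t)$. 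This is exactly the deterministic identity $H' = 2D$ plus the two ``error'' terms $2\E(Vf,f)$ and $H_G(t)$, which I will eventually absorb into the correction function $Q(t)$.

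Next I would differentiate $D(t)$. Here the hypothesis \eqref{eq:S_t}, $d(\cS f) = \cS_t f\,dt + \cS\,df$, is precisely what lets me apply \Ito's product rule to $(\cS f, f)$: I get $D'(t) = \E(\cS_t f, f) + \E(\cS df, f) + \E(\cS f, df) + \E(\cS(Gf), Gf)$, where the last term is the \Ito\ cross-variation coming from the $dW$ parts of $\cS f$ (which is $\cS(Gf)\,dW$ modulo the $\cS_t$ convention) and of $f$. Substituting $df$ and using symmetry of $\cS$, the drift terms combine to $2\E(\cS f, \cS f) + 2\E(\cS f, \cA f) + 2\E(\cS f, Vf)$; since $\cA$ is skew-symmetric, $2\E(\cS f,\cA f) = \E([\cS,\cA]f, f)$. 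Altogether $D'(t) = \E(\cS_t f + [\cS,\cA]f, f) + 2\E\|\cS f\|^2 + D_G(t) + 2\E(\cS f, Vf)$. The deterministic heart of the argument is the Cauchy--Schwarz/variance inequality $\E\|\cS f\|^2 \cdot H(t) \ge D(t)^2$, i.e. $\E\|\cS f\|^2 \ge D(t)^2/H(t)$, which after the standard manipulation $(\log H)'' = (H''/H) - (H'/H)^2$ and $H' \approx 2D$ yields a lower bound of the form $(\log H)'' \ge \tfrac{2}{H}\{\E(\cS_t f + [\cS,\cA]f, f) + D_G - D H_G/H\} + (\text{terms controlled by } \|V\|_\infty, \|G\|_\infty)$.

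The remaining bookkeeping is to collect all the perturbative terms — those involving $\E(Vf,f)$, $\E(\cS f, Vf)$, $H_G$, $D_G$ beyond the main $D_G - DH_G/H$ piece — and show each is a second derivative (up to sign) of a bounded function, so that they can be moved to the left-hand side as $Q(t)$. Concretely one writes $Q(t)$ as an explicit primitive (double integral in $t$) of these terms and checks $\|Q\|_\infty \le N(\|V\|_\infty + \|V\|_\infty^2 + \|G\|_\infty^2)$ using $|\E(Vf,f)| \le \|V\|_\infty H$, $|\E(\cS f, Vf)| \le \|V\|_\infty \E\|\cS f\|\,\|f\|$ absorbed via Young's inequality into $\e\E\|\cS f\|^2 + C_\e\|V\|_\infty^2 H$, and $H_G \le \|G\|_\infty^2 H$; the ratios are all bounded since everything is normalized by $H(t)$. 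I expect the main obstacle to be the careful handling of the $\E(\cS f, Vf)$ and $\E\|\cS f\|^2$ terms: one must spend a fraction of the good term $2\E\|\cS f\|^2$ to dominate the indefinite-sign cross term without destroying the Cauchy--Schwarz inequality $\E\|\cS f\|^2 \ge D^2/H$ that drives the convexity — this is the one place where the stochastic $H_G$, $D_G$ terms genuinely interact with the deterministic structure rather than being passive perturbations. A secondary technical point, as in \cite{Escauriaza2008}, is that this whole computation is formal (it presumes $f$, $\cS f$, $\cS_t f$ lie in the relevant $L^2$ spaces with enough decay to integrate by parts), and the rigorous justification is deferred, relying on the interior-regularity estimate of Lemma \ref{lem:interiorReg}.
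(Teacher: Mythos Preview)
Your overall plan matches the paper's proof: compute $\dot H = 2D + 2\E(Vf,f) + H_G$ via \Ito's formula, compute $\dot D$ using the hypothesis $d(\cS f)=\cS_t f\,dt+\cS\,df$, absorb the bounded errors $2\E(Vf,f)/H$ and $H_G/H$ into a primitive $F$ so that $(\log H + F)' = 2D/H$, and then bound $(\log H + F)'' = \tfrac{2}{H}(\dot D - D\dot H/H)$ from below using Cauchy--Schwarz. The stochastic extras $D_G$ and $DH_G/H$ fall out exactly as you describe.

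There is, however, a genuine gap at the point you yourself flag as the main obstacle: the treatment of $2\E(\cS f, Vf)$ by Young's inequality does not work. If you spend $\e\E\|\cS f\|^2$ to absorb the cross term, you are left with $(2-\e)\E\|\cS f\|^2 - 2D^2/H$ (and an analogous loss from the term $-2D\E(Vf,f)/H$ hiding in $D\dot H/H$, which is also not bounded by $\|V\|_\infty H$). Cauchy--Schwarz only gives $\E\|\cS f\|^2 \ge D^2/H$, so the best you obtain is $(2-\e)\E\|\cS f\|^2 - 2D^2/H \ge -\e D^2/H$, and $D^2/H$ is not controlled by $H$ times any power of $\|V\|_\infty$ or $\|G\|_\infty$. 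No choice of $\e>0$ rescues this.

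The paper avoids this loss by an exact algebraic rearrangement rather than Young: write
\[
2\|\cS f\|^2 + 2(\cS f, Vf) = \tfrac12\|2\cS f + Vf\|^2 - \tfrac12\|Vf\|^2,
\qquad
D\dot H = \tfrac12\big[|\E(2\cS f+Vf,f)|^2 - |\E(Vf,f)|^2\big] + D\,H_G.
\]
Then Cauchy--Schwarz applied to the vector $2\cS f + Vf$ (not to $\cS f$ alone) gives $\tfrac12\E\|2\cS f + Vf\|^2 - \tfrac{1}{2H}|\E(2\cS f + Vf,f)|^2 \ge 0$, and the only residual $V$-term is $-\tfrac12\E\|Vf\|^2 \ge -\tfrac12\|V\|_\infty^2 H$, which is what produces the $\|V\|_\infty^2$ piece of the bound on $Q$. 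In short: complete the square first and apply Cauchy--Schwarz second, so that the cross terms in $\dot D$ and in $D\dot H/H$ cancel exactly rather than approximately.
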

\begin{proof}
 By \Ito's formula, we have
\begin{align*}
  d(f, f) & = 2(df, f) + \|Gf\|^2 dt\\
& = 2(\cS f, f) dt + 2(Vf, f) dt + 2(Gf, f) \, dW.
\end{align*}
Thus
\begin{equation}
  \label{eq:dH-1}
  \dot{H}(t) = 2D(t) + 2\E (Vf, f) + H_G(t).
\end{equation}
Let us rewrite $D$ as follows:
\begin{equation}
  \label{eq:D}
  D(t) = \frac{1}{2}\E (2\cS f+ Vf, f) -\frac{1}{2}\E (Vf, f),
\end{equation}
and so
\begin{equation}
  \label{eq:dH-2}
  \dot{H}(t) = \E (2\cS f + Vf, f) + \E(Vf, f) + H_G(t).
\end{equation}
Then
\begin{equation}
  \label{eq:DdH}
  D(t)\dot{H}(t) = \frac{1}{2}\left[|\E(2\cS f + Vf, f)|^2 - |\E(Vf, f)|^2\right] + D(t)H_G(t).
\end{equation}
By \Ito's formula again and equality \eqref{eq:S_t}, we have
\begin{align*}
  d(\cS f, f)
& = (d(\cS f), f) + (\cS f, df) + (\cS (Gf), Gf)dt\\
& = (\cS_t f,f) dt +  2(\cS f, df) + (\cS (Gf), Gf) dt\\
& = (\cS_t f,f) dt +  2 \|\cS f\|^2 dt + 2(\cS f, \cA f) dt + 2(\cS f, Vf) dt\\
&\quad  + 2(\cS f, Gf) \, dW + (\cS (Gf), Gf) dt\\
& = (\cS_t f, f) dt + ([\cS, \cA]f, f) dt +\frac{1}{2} \|2\cS f + Vf\|^2 dt -\frac{1}{2}  \|Vf\|^2 dt\\
&\quad  + 2(\cS f, Gf) \, dW + (\cS (Gf), Gf) dt,
\end{align*}
In the last equality, we use the identities
\[
2(\cS f, \cA f) = ([\cS, \cA]f, f),
\]
and
\[
2 \|\cS f\|^2 + 2(\cS f, Vf) = \frac{1}{2} \|2\cS f + Vf\|^2 -\frac{1}{2}  \|Vf\|^2.
\]
Thus
\begin{equation}
  \label{eq:dD}
  \dot{D}(t) = \E (\cS_t f+[\cS, \cA]f, f) + \frac{1}{2} \E \|2\cS f + Vf\|^2 -\frac{1}{2} \E \|Vf\|^2 + D_G(t).
\end{equation}
Thus by \eqref{eq:dH-1} we obtain
\[
\frac{d}{dt}[\log H(t)] = \frac{\dot{H}(t)}{H(t)} = \frac{2D(t)}{H(t)} - \dot{F},
\]
where $F$ verifies
\begin{equation}
  \label{eq:F}
  \dot{F}(t) = - \frac{2\E(Vf, f)}{H(t)} - \frac{H_G(t)}{H(t)},\quad F(0) = 0.
\end{equation}
Then it follows from \eqref{eq:DdH} and \eqref{eq:dD} that
\begin{align*}
  & \frac{d^2}{dt^2}[\log H(t) + F(t)]\\ 
   =\, & 2 \frac{d}{dt}\left[\frac{D(t)}{H(t)}\right] = \frac{2}{H(t)}\left[\dot{D}(t) - \frac{D(t)\dot{H}(t)}{H(t)}\right]\\
=\, & \frac{2}{H(t)}\bigg\{\E (\cS_tf+[\cS, \cA]f, f) + \frac{1}{2} \E \|2\cS f + Vf\|^2 -\frac{1}{2} \E \|Vf\|^2 + D_G(t)\\
& \qquad - \frac{1}{2H(t)}\left[|\E(2\cS f + Vf, f)|^2 - |\E(Vf, f)|^2\right] -\frac{D(t)H_G(t)}{H(t)}\bigg\}.
\end{align*}
By Cauchy-Schwarz inequality, we have
\[
|\E (2\cS f + Vf,f)|^2 \le \E \|2\cS f + Vf\|^2\, \E \|f\|^2,
\]
which, together with the inequalities
\[
|\E (Vf, f)|^2 \ge 0, \quad\text\quad \E \|Vf\|^2 \le \|V\|^2_\infty\, \E \|f\|^2,
\]
implies
\begin{multline*}
\frac{d^2}{dt^2}[\log H(t) + F(t)] \ge
  \frac{2}{H(t)}\bigg\{\E (\cS_tf+[\cS, \cA]f, f)+  D_G(t)  -\frac{D(t)H_G(t)}{H(t)}\bigg\}
 - \|V\|^2_\infty.
\end{multline*}
Now let $Q (t) = F(t) + t(1-t)\|V\|^2_\infty$, and then \eqref{eq:F} yields \eqref{eq:d^2H} and \eqref{eq:Q}, which completes the proof.

\end{proof}

Recall that according to Remark \ref{rem:blowup}, we are unable to control the term $\sup_{t\in [0,1]}\E \|e^{\gamma |x|^2}u(t)\|^2$ directly from the energy estimate. On the other hand, we cannot use this abstract result in the same way as in the deterministic case to control that quantity. However, the next lemma allows us to bound the corresponding $L^2$-norm in the time variable. Moreover, it also shows that both 
\[
  \E \|e^{\gamma |x|^2} |x| u(t)\|^2, \quad\text{and}\quad \E \|e^{\gamma|x|^2} \nabla u(t)\|^2
\]
are finite for almost every $t\in [0,1]$. The later fact will be used in the proof of the logarithmic convexity.

In the rest of this section, we will make another assumption on the potential $V$ and the noise $G$, due to the stochastic conformal transformation studied in Lemma \ref{lem:conTrans}.

\begin{assumption}
  \label{assump:V-G-2}
The measurable functions $V$ and $G: [0,1]\times\R^n \to \R$ satisfy the following conditions:
\begin{itemize}
  \item[(1)] $V$ is bounded on $[0,1]\times\R^n$, and $G\in C^1_b([0,1]\times \R^n);$
  \item[(2)] Given $\gamma>0$, there exists $\e>0$ such that $\ds \sup_{t\in [0,1]}|G(t,x)|\le \sqrt{\gamma}|x|^{-\e}$ for $|x|\ge \max\{\gamma,2\};$
  \item[(3)] $\displaystyle \lim_{L\to\infty} \sup_{t\in[0,1],|x|>L} |V(t,x)|= \lim_{L\to\infty} \sup_{t\in[0,1],|x|>L} |\nabla G(t,x)|=0$.
\end{itemize}
\end{assumption}

\begin{lemma}
  \label{lem:lem4}
Suppose $u$ is a solution of equation \eqref{eq:main eqn}, and Assumption \ref{assump:V-G-2} holds. Then, for $\gamma >  \|G\|_\infty^2/4$, there is $N = N(\gamma, \|V\|_\infty, \|G\|_\infty, \|\nabla G\|_\infty)$ such that
\begin{multline}
  \label{eq:log-int}
  \int_0^1 \E \|e^{\gamma |x|^2} u(t)\|^2\, dt + \int_0^1 t(1-t) \E \|e^{\gamma |x|^2} |x| u(t)\|^2\, dt + \int_0^1 t(1-t) \E \|e^{\gamma|x|^2} \nabla u(t)\|^2\, dt\notag\\
\le N\left(\E \|e^{\gamma |x|^2} u(0)\|^2 + \E \|e^{\gamma |x|^2}u(1)\|^2 + \sup_{t\in [0,1]} \E \|u(t)\|^2\right).
\end{multline}
\end{lemma}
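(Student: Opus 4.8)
The plan is to run the abstract logarithmic‑convexity calculation of Lemma~\ref{lem:absLem} with the regularized convex weight $e^{\gamma\varphi_a}$ of Lemma~\ref{lem:interiorReg} in place of the singular weight $e^{\gamma|x|^2}$, derive an $a$‑uniform bound, and then let $a\downarrow0$. We may assume the right‑hand side of \eqref{eq:log-int} is finite and $u\not\equiv0$. Fix $a\in(0,1)$ small, set $v_a=e^{\gamma\varphi_a(x)}u$, $H_a(t)=\E\|v_a(t)\|^2$, and write $\varphi=\gamma\varphi_a$. By \Ito's formula $v_a$ solves
\[
dv_a=(\cS_a+\cA_a)v_a\,dt+Vv_a\,dt+Gv_a\,dW,\qquad \cS_a=\Delta+|\nabla\varphi|^2,\quad \cA_a=-2\nabla\varphi\cdot\nabla-\Delta\varphi,
\]
with $\cS_a$ symmetric and time‑independent (so that the operator $\cS_t$ in \eqref{eq:S_t} vanishes) and $\cA_a$ skew‑symmetric. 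By Lemma~\ref{lem:interiorReg} and the energy estimate of Lemma~\ref{lem:energy}, the quantities $H_a$, $D_a:=\E(\cS_av_a,v_a)$, $\E\|\cS_av_a\|^2$, $D_{G,a}:=\E(\cS_a(Gv_a),Gv_a)$ and $H_{G,a}:=\E\|Gv_a\|^2$ are finite on $(0,1]$, and finite at $t=0,1$ by hypothesis since $\gamma\varphi_a(x)\le\gamma|x|^2+C(a)$ with $C(a)\to0$; hence the formal manipulations of Lemma~\ref{lem:absLem} are justified by the truncation/mollification of \cite{Escauriaza2008}.

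The first key ingredient is the commutator identity: integrating by parts,
\[
\E\big([\cS_a,\cA_a]v_a,v_a\big)=4\,\E\!\int_{\R^n}\langle D^2\varphi\,\nabla v_a,\nabla v_a\rangle\,dx+\E\!\int_{\R^n}\big(4\langle D^2\varphi\,\nabla\varphi,\nabla\varphi\rangle-\Delta^2\varphi\big)|v_a|^2\,dx.
\]
Since $\varphi_a$ is convex, $D^2\varphi\ge0$; and from the explicit form of $\varphi_a$ one checks $4\langle D^2\varphi\,\nabla\varphi,\nabla\varphi\rangle-\Delta^2\varphi\ge c_1\gamma^3(1+|x|)^{2-3a}\ge c_1\gamma^3$ for $a$ small. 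Thus the commutator controls, up to $a$‑independent constants, the quantity $g_a(t):=\E\|e^{\gamma\varphi_a}\nabla u(t)\|^2+\E\|e^{\gamma\varphi_a}\,|x|\,u(t)\|^2$, after absorbing the crossed term produced when passing between $\nabla v_a$ and $e^{\gamma\varphi_a}\nabla u$.

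The second ingredient is to show the noise does not spoil this coercivity. From $D_{G,a}\ge-2\|G\|_\infty^2\,\E\|\nabla v_a\|^2-2\,\E\!\int|\nabla G|^2v_a^2+\E\!\int|\nabla\varphi|^2G^2v_a^2$ and, after distinguishing the sign of $D_a$, $-D_aH_{G,a}/H_a\ge-\|G\|_\infty^2\,\E\!\int|\nabla\varphi|^2v_a^2$, one compares the coercive $|v_a|^2$‑coefficient $\sim32\gamma^3|x|^2$ against the noise coefficient $\sim4\gamma^2\|G\|_\infty^2|x|^2$ in the region $|x|\lesssim\gamma$, where $\varphi\sim\gamma|x|^2$: this is the origin of the hypothesis $\gamma>\|G\|_\infty^2/4$. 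In the complementary region the decay of $G$ in Assumption~\ref{assump:V-G-2}(2) makes $|\nabla\varphi|^2G^2\lesssim|x|^{2-2a-2\e}$ lower order than the coercive $|x|^{2-3a}$ once $a<2\e$, the decay of $\nabla G$ in Assumption~\ref{assump:V-G-2}(3) gives $\E\!\int|\nabla G|^2v_a^2\le C_N\,M+\tfrac14(\text{coercive term})$ with $M:=\sup_{t}\E\|u(t)\|^2$, and the gradient loss $2\|G\|_\infty^2\E\|\nabla v_a\|^2$ is absorbed by the coercive gradient term, whose coefficient tends to $8\gamma>2\|G\|_\infty^2$. Altogether, for all small $a$,
\[
\E\big([\cS_a,\cA_a]v_a,v_a\big)+D_{G,a}-\frac{D_aH_{G,a}}{H_a}\ \ge\ c\,g_a(t)-C\,M,
\]
with $c,C$ independent of $a$.

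Feeding this into Lemma~\ref{lem:absLem} gives $(\log H_a+Q_a)''\ge\frac{2}{H_a}\big(c\,g_a(t)-CM\big)$, while $\|Q_a\|_\infty$ and $\|Q_a'\|_\infty$ are bounded in terms of $\|V\|_\infty,\|G\|_\infty$ only, hence independently of $a$. Multiplying by $t(1-t)$, integrating over $[0,1]$ and integrating by parts — using $\int_0^1t(1-t)H_a(\log H_a)''\,dt\le H_a(1)+H_a(0)-2\int_0^1H_a\,dt$ (since $H_a(\log H_a)''\le\ddot H_a$), disposing of the $Q_a$‑contributions with the $a$‑uniform bounds on $Q_a'$ and, after one further integration by parts together with $|\dot H_a|\lesssim g_a+H_a$, on the remaining terms, and rearranging — one arrives at
\[
\int_0^1H_a\,dt+\int_0^1t(1-t)g_a(t)\,dt\ \le\ N\big(H_a(0)+H_a(1)+M\big)\ \le\ N'\big(\E\|e^{\gamma|x|^2}u_0\|^2+\E\|e^{\gamma|x|^2}u(1)\|^2+M\big),
\]
with $N,N'$ independent of $a$. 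Since $\varphi_a(x)\to|x|^2$ pointwise as $a\downarrow0$, Fatou's lemma on the left yields the three quantities of \eqref{eq:log-int}. The technical heart, and the only delicate point, is the rigorous justification: Lemma~\ref{lem:interiorReg} is exactly what keeps every term finite on $(0,1)$ for the weight $\varphi_a$, and the noise estimate must be pushed below a genuine fraction of the coercive commutator term with a constant uniform in $a$, which is precisely what forces both $\gamma>\|G\|_\infty^2/4$ and the decay of $V,G,\nabla G$ at infinity.
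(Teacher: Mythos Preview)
Your overall strategy---work with the regularized weight $\varphi_a$, extract coercivity from the commutator, control the noise terms using $\gamma>\|G\|_\infty^2/4$ and the decay in Assumption~\ref{assump:V-G-2}, then let $a\downarrow 0$---is exactly right, and the coercivity/noise analysis is essentially the same as the paper's. The gap is in how you pass from the differential inequality to the integrated estimate.

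You invoke the \emph{conclusion} of Lemma~\ref{lem:absLem}, namely $(\log H_a+Q_a)''\ge \tfrac{2}{H_a}(cg_a-CM)$, multiply by $t(1-t)H_a$, and integrate. The step ``disposing of the $Q_a$-contributions with the $a$-uniform bounds on $Q_a'$ and $|\dot H_a|\lesssim g_a+H_a$'' is where the argument breaks. After one integration by parts the $Q_a$-piece produces $\int_0^1 t(1-t)\,\dot H_a\,Q_a'\,dt$; since $|Q_a'|$ is only bounded by a constant of size $\|V\|_\infty+\|G\|_\infty^2$, combining with $|\dot H_a|\lesssim g_a+H_a$ puts a term of order $(\|V\|_\infty+\|G\|_\infty^2)\int_0^1 t(1-t)g_a\,dt$ on the right-hand side. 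There is no reason this can be absorbed into the coercive term $2c\int t(1-t)g_a$, because $c$ depends only on $\gamma$ while $\|V\|_\infty$ may be arbitrarily large. If instead you try to use the decay of $V,G$ to make $|2\E(Vf,f)+H_G|\le \e H_a+C_L M$, the resulting cross-term still contains $C_L M\int t(1-t)|\dot H_a|/H_a$, and $|\dot H_a|/H_a$ involves $|D_a|/H_a$, which is not a~priori bounded. So as written this step does not close.

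The paper avoids this entirely by \emph{not} going through $(\log H)''$. It uses only the intermediate identities from the proof of Lemma~\ref{lem:absLem},
\[
\dot H=2D+2\E(Vf,f)+H_G,\qquad \dot D=\E([\cS,\cA]f,f)+\tfrac12\E\|2\cS f+Vf\|^2-\tfrac12\E\|Vf\|^2+D_G,
\]
multiplies $\dot H$ by $(1-2t)$, and integrates. On one side $\int_0^1(1-2t)\dot H=2\int_0^1 H-H(0)-H(1)$; on the other, $\int_0^1(1-2t)D=\int_0^1 D\,d(t(1-t))=-\int_0^1 t(1-t)\dot D$. Substituting $\dot D$ and dropping the nonnegative term $\E\|2\cS f+Vf\|^2$ gives directly
\[
H(0)+H(1)-2\!\int_0^1\!H\ \ge\ 2\!\int_0^1\!t(1-t)\E([\cS,\cA]f,f)+2\!\int_0^1\!t(1-t)D_G-\!\int_0^1\!t(1-t)\E\|Vf\|^2-2\!\int_0^1\!(1-2t)\E(Vf,f)-\!\int_0^1\!(1-2t)H_G.
\]
No $Q$, no $D H_G/H$, no $\dot H/H$. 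The potential and noise now enter only through the \emph{linear} quantities $\E\|Vf\|^2$, $\E(Vf,f)$, $H_G$, $\E\int|\nabla G|^2f^2$, each of which splits cleanly into $\{|x|\le L\}$ (bounded by $C_L\sup_t\E\|u\|^2$) and $\{|x|>L\}$ (bounded by $\e\int H$), and the resulting $\e$-terms are absorbed into $2\int H$ on the left. Your commutator and $D_G$ estimates then finish the job exactly as you describe. In short: replace the use of the \emph{output} of Lemma~\ref{lem:absLem} by the two raw formulas for $\dot H$ and $\dot D$, and multiply $\dot H$ by $(1-2t)$ rather than working with $(\log H)''$.
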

\begin{proof}
Let $f = e^{\gamma \varphi}u$, where $\varphi = \varphi(x)$ is to be chosen. Then $f$ satisfies, formally
\[
df = (\cS f + \cA f + Vf)\, dt + G f \, dW(t),
\]
where 
\begin{equation}
  \label{eq:S-A}
  \cS = \Delta + \gamma^2 |\nabla \varphi|^2, \quad\text{and}\quad \cA = -2\gamma \nabla \varphi\cdot \nabla - \gamma \Delta\varphi
\end{equation}
are symmetric and skew-symmetric operators, respectively. We do calculations as in Lemma \ref{lem:absLem}, and recall that
\[
  \begin{aligned}
    \dot{H}(t)& = 2D(t) + 2\E (Vf, f) + H_G(t),\qquad\text{and}\\
 \dot{D}(t) & = \E ([\cS, \cA]f, f) + \frac{1}{2} \E \|2\cS f + Vf\|^2 -\frac{1}{2} \E \|Vf\|^2 + D_G(t),
  \end{aligned}
\]
where $H,H_G,D$ and $D_G$ are defined in \eqref{eq:H-D}.

Multiplying $\dot{H}(t)$ by $(1-2t)$ and integrating in $t\in [0,1]$, we get
\begin{align*}
  \int_0^1 (1-2t) \dot{H}(t)\, dt = 2\int_0^1 H(t)\, dt - H(0) - H(1).
\end{align*}
On the other hand,
\begin{align*}
 &  \int_0^1(1-2t) \dot{H}(t)\, dt\\
=&\, 2 \int_0^1 (1-2t) D(t)\, dt + 2\int_0^1 (1-2t) \E (Vf,f) \, dt + \int_0^1 (1-2t) H_G(t)\, dt\\
= &\, 2\int_0^1 D(t)\, d(t(1-t)) + 2\int_0^1 (1-2t) \E (Vf,f) \, dt + \int_0^1 (1-2t) H_G(t)\, dt\\
= &\, -2\int_0^1 t(1-t) \dot{D}(t)\, dt  + 2\int_0^1 (1-2t) \E (Vf,f) \, dt + \int_0^1 (1-2t) H_G(t)\, dt\\
= &\, -2\int_0^1 t(1-t) \E([\cS,\cA]f,f)\, dt - \int_0^1t(1-t) \E \|2\cS f + Vf\|^2\, dt\\
&\qquad + \int_0^1t(1-t) \E\|Vf\|^2 \, dt - 2\int_0^1 t(1-t) D_G(t)\, dt\\
& \qquad + 2\int_0^1 (1-2t) \E (Vf,f) \, dt + \int_0^1 (1-2t) H_G(t)\, dt.
\end{align*}
Therefore, we have 
\begin{multline*}
  H(0) + H(1) - 2\int_0^1 H(t)\, dt\\
\ge  \, 2\int_0^1 t(1-t) \E([\cS,\cA]f,f)\, dt
 - \int_0^1 t(1-t) \E \|Vf\|^2\, dt
-2 \int_0^1(1-2t) \E(Vf, f)\, dt\\ - \int_0^1 (1-2t) H_G(t)\, dt + 2\int_0^1 t(1-t) D_G(t)\, dt.
\end{multline*}

Now, formally, if $\varphi(x) = |x|^2$ we have from \eqref{eq:S-A} that
\[
  ([\cS,\cA]f,f) = 8\gamma \int_{\R^n} |\nabla f|^2\, dx + 32\gamma^3\int_{\R^n} |x|^2|f|^2\, dx,
\]
and
\begin{align*}
  (\cS(Gf),Gf) & = -\int_{\R^n} |\nabla(Gf)|^2\, dx + 4\gamma^2\int_{R^n} |x|^2|Gf|^2\, dx\\
& \ge -2 \int_{\R^n} |G|^2 |\nabla f|^2\, dx - 2\int_{\R^n} |\nabla G|^2 |f|\, dx.
\end{align*}
Thus,
\begin{align*}
  & 2\int_0^1 H(t)\, dt + 4(4\gamma-\|G\|^2_\infty)\int_0^1 t(1-t) \E \int_{\R^n}|\nabla f|^2\, dx dt\\
&\qquad  + 64\gamma^3\int_0^1 t(1-t) \E \int_{\R^n} |x|^2 |f|^2\, dx dt\\
\le &\, H(0) + H(1) +  \int_0^1 t(1-t) \E \|Vf\|^2\, dt
+ 2 \int_0^1(1-2t) \E(Vf, f)\, dt\\ 
&\qquad + \int_0^1 (1-2t) H_G(t)\, dt + 4 \int_0^1 t(1-t) \E \int_{R^n} |\nabla G|^2 |f|^2 \,dx  dt.
\end{align*}
By assumptions on $G$ and $V$, for a given $\e>0$, there exists $L>0$ such that when $|x|>L$ we have
\[
  \max\left\{\sup_{t\in[0,1]}|G(t,x)|, \sup_{t\in[0,1]}|V(t,x)|, \sup_{t\in[0,1]}|\nabla G(t,x)|\right\}\le \e.
\]
Therefore, we obtain that
\begin{align*}
  \int_0^1 t(1-t)\E \|Vf\|^2\, dt 
& = \int_0^1 t(1-t) \E\left[\int_{|x|\le L} |V|^2 |f|^2 \, dx + \int_{|x|> L} |V|^2 |f|^2\, dx\right] dt\\
& \le \frac{\|V\|^2_\infty}{4} e^{2\gamma L^2} \sup_{t\in[0,1]} \E \|u(t)\|^2 + \frac{\e^2}{4}\int_0^1 H(t)\, dt.
\end{align*}
In the same way, we also have
\begin{gather*}
  2\int_0^1 (1-2t) \E(Vf, f)\, dt \le 2\|V\|_\infty e^{2\gamma L^2} \sup_{t\in [0,1]}\E\|u(t)\|^2 + 2\e \int_0^1 H(t)\, dt,\\
\int_0^1 (1-2t) H_G(t)\, dt \le \|G\|^2_\infty e^{2\gamma L^2} \sup_{t\in [0,1]}\E\|u(t)\|^2 + \e^2 \int_0^1 H(t)\, dt,\\
4 \int_0^1 t(1-t) \E \int_{R^n} |\nabla G|^2 |f|^2 \,dx  dt \le \|\nabla G\|^2_\infty e^{2\gamma L^2} \sup_{t\in [0,1]}\E\|u(t)\|^2 + \e^2 \int_0^1 H(t)\, dt.
\end{gather*}
Putting everything together yields
\begin{align}
  & (2-9\e^2/4 -2\e) \int_0^1 H(t)\, dt + 4(4\gamma-\|G\|^2_\infty)\int_0^1 t(1-t) \E \int_{\R^n}|\nabla f|^2\, dx dt\notag\\
&\qquad  + 64\gamma^3\int_0^1 t(1-t) \E \int_{\R^n} |x|^2 |f|^2\, dx dt\notag\\
\le &\, H(0) + H(1) +  N\sup_{t\in [0,1]}\E \|u(t)\|^2.\label{eq:0325-1}
\end{align}
We can now choose $\e$ small enough so that $2-9\e^2/4 - 2\e>0$ and conclude the result by using the  inequality (2.21) in \cite{Escauriaza2008}
\begin{equation}
  \label{eq:(2.21)}
  2\E \int_{\R^n} |\nabla f|^2 + 4\gamma^2 |x|^2|f|^2\, dx \ge \E \int_{\R^n} e^{2\gamma |x|^2} |\nabla u|^2\, dx.
\end{equation}

In order to make the calculations above rigorous, we set $f_a = e^{\gamma \varphi_a}u$, where $\varphi_a$ satisfies \eqref{eq:varphi-a-1} and \eqref{eq:varphi-a-2}. Then
\[
  \partial_{ij} \varphi_a(x) = \frac{\varphi'_a(|x|)}{|x|} \delta_{ij} -a \frac{x_ix_j}{|x|^2}\zeta_a(|x|),
\]
and thus
\begin{equation}
  \label{eq:D^2varphi}
  \cD^2\varphi_a(x) =
  \begin{cases}
    2I_n + O(a)\sum_{i,j=1}^n E_{ij}, &\mbox{if}~0\le |x|\le \max\{\gamma,2\},\\
    2|x|^{-a} I_n + O(a)\sum_{i,j=1}^n E_{ij}, &\mbox{if}~|x|\ge \max\{\gamma,2\},
  \end{cases}
\end{equation}
where $I_n$ is an $n\times n$ identity matrix, and $E_{ij}$ is the elementary matrix whose only nonzero entry is a $1$ in $i$-th row and $j$-th column. Also, in this case we have
\begin{equation}
  \label{eq:Delta^2}
  \|\Delta^2 \varphi_a\|_\infty \le C(n) a.
\end{equation}
Thus, 
\begin{align}
  \int_{\R^n} \cD^2 \varphi_a\nabla f_a\cdot \nabla f_a\, dx
& = \int_{|x|\le \max\{\gamma,2\}} \cD^2 \varphi_a\nabla f_a\cdot \nabla f_a\, dx + \int_{|x|\ge \max\{\gamma,2\}}\cD^2 \varphi_a\nabla f_a\cdot \nabla f_a\, dx\notag\\
& \ge 2 \int_{|x|\le \max\{\gamma,2\}} |\nabla f_a|^2\, dx +  n \int_{|x|\le \max\{\gamma,2\}} O(a) |\nabla f_a|^2\, dx\notag\\
& \quad + 2 \int_{|x|\ge \max\{\gamma,2\}}|x|^{-a} |\nabla f_a|^2\, dx +  n \int_{|x|\ge \max\{\gamma,2\}} O(a) |\nabla f_a|^2\, dx,\label{eq:0327-1}
\end{align}
and so
\begin{align*}
 &  8 \gamma \int_{\R^n} \cD^2 \varphi_a\nabla f_a\cdot \nabla f_a\, dx - 4\int_{\R^n} |G|^2|\nabla f_a|^2\, dx\\
\ge &\, 4\int_{|x|\le \max\{\gamma,2\}}(4\gamma + nO(a) - \|G\|^2_\infty) |\nabla f_a|^2\, dx\\
&\quad + 4\int_{|x|\ge \max\{\gamma,2\}} (3\gamma + n O(a)) |x|^{-a}|\nabla f_a|^2\, dx + 4\int_{|x|\ge \max\{\gamma,2\}} (\gamma |x|^{-a}-|G|^2)|\nabla f_a|^2\, dx.
\end{align*}
By choosing $a$ small enough, the first two integrals on the right hand side of the above inequality are non-negative by the condition that $\gamma > \|G\|^2_\infty/4$, and so is the last one due to the decay of the noise $G$ in Assumption \ref{assump:V-G-2}.

Observing that
\begin{multline*}
([\cS,\cA]f_a,f_a) =4\gamma \int_{\R^n} \cD^2\varphi_a \nabla f_a\cdot \nabla f_a\, dx + 4\gamma^3 \int_{\R^n} \cD^2\varphi_a \nabla \varphi_a\cdot \nabla \varphi_a |f_a|^2\, dx\\ - \gamma \int_{\R^n} \Delta^2\varphi_a |f_a|^2\, dx,
\end{multline*}
and repeating the formal computations as before, we obtain
\begin{multline*}
   (2-9\e^2/4 -2\e-\gamma C(n)a) \int_0^1 H_a(t)\, dt
  + 4\gamma^3\int_0^1 t(1-t) \E  \int_{\R^n} \cD^2\varphi_a \nabla \varphi_a\cdot \nabla \varphi_a |f_a|^2\, dx dt\\
+ 4\int_{|x|\le \max\{\gamma,2\}}(4\gamma + nO(a) - \|G\|^2_\infty) |\nabla f_a|^2\, dx
+ 4\int_{|x|\ge \max\{\gamma,2\}} (3\gamma + n O(a)) |x|^{-a}|\nabla f_a|^2\, dx\\
 + 4\int_{|x|\ge \max\{\gamma,2\}} (\gamma|x|^{-a}-|G|^2)|\nabla f_a|^2\, dx\\
\le \, H_a(0) + H_a(1) +  N\sup_{t\in [0,1]}\E \|u(t)\|^2,
\end{multline*}
where $H_a(t) = \E \|f_a(t)\|^2$.
By letting $a$ tend to zero, we prove \eqref{eq:0325-1} rigorously.

Next, we would like to replace the term $\int_{\R^n} |\nabla f|^2\, dx$ by $\int_{\R^n} e^{2\gamma |x|^2} |\nabla u|^2\, dx$ in \eqref{eq:0325-1}. To do this, we notice that \eqref{eq:0325-1} holds for $f_\rho = e^{(\gamma-\rho)|x|^2}u$ as well. Then by using the same argument as the interior regularity result in Lemma \ref{lem:interiorReg}, we can justify \eqref{eq:(2.21)} with such $f_\rho$ for $t\in [\e, 1]$. In the end, we send $\rho$ and $\e$ to zero and complete the proof.
\end{proof}

Now we are ready to show the logarithmic convexity.

\begin{lemma}
  \label{lem:lem3}
  Suppose $u$ is a solution of equation \eqref{eq:main eqn}, and Assumption \ref{assump:V-G-2} holds. Let
\[
M := \|V\|_\infty, \quad M_0 := \|G\|_\infty, \quad M_1 := \|\nabla G\|_\infty, \quad\text{and}~\gamma > M_0^2/4.
\]
Assume $\E \|e^{\gamma|x|^2}u(0)\|, \E \|e^{\gamma|x|^2}u(1)\|, M, M_0$ and $ M_1$ are finite. Then for every $t\in (0,1)$, $\E \|e^{\gamma|x|^2}u(t)\|^2$ is ``logarithmically convex'' and there is a universal constant $N$ such that
\begin{equation}
  \label{eq:log-convex}
  \E \|e^{\gamma|x|^2}u(t)\|^2 \le e^{N(M+M^2+M_0^2+M_1^2)}[\E \|e^{\gamma|x|^2}u(0)\|^2]^{1-t} [\E \|e^{\gamma|x|^2}u(t)\|^2]^t.
\end{equation}
\end{lemma}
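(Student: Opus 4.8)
The plan is to conjugate the equation by the Gaussian weight and then apply the abstract differential inequality of Lemma~\ref{lem:absLem}. Set $f = e^{\gamma\varphi}u$ with $\varphi(x) = |x|^2$; formally $f$ solves $df = (\cS f + \cA f + Vf)\,dt + Gf\,dW(t)$ with the symmetric and skew-symmetric operators $\cS = \Delta + \gamma^2|\nabla\varphi|^2$ and $\cA = -2\gamma\nabla\varphi\cdot\nabla - \gamma\Delta\varphi$ of \eqref{eq:S-A}, and since $\varphi$ does not depend on $t$ the operator $\cS_t$ in \eqref{eq:S_t} is simply $\cS_t = 0$. With $H, D, H_G, D_G$ as in \eqref{eq:H-D}, Lemma~\ref{lem:absLem} then yields
\[
\frac{d^2}{dt^2}\left[\log H(t)+Q(t)\right]\ \ge\ \frac{2}{H(t)}\left\{\E\left([\cS,\cA]f,f\right)+D_G(t)-\frac{D(t)\,H_G(t)}{H(t)}\right\},
\]
together with $\|Q\|_\infty \le N(M+M^2+M_0^2)$.

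The next step is to bound the braced quantity from below. As in the proof of Lemma~\ref{lem:lem4}, for $\varphi = |x|^2$ one has $[\cS,\cA] = -8\gamma\Delta + 32\gamma^3|x|^2$, so $\E([\cS,\cA]f,f) = 8\gamma\,\E\|\nabla f\|^2 + 32\gamma^3\,\E\| |x|f\|^2$; and $D_G = -\E\|\nabla(Gf)\|^2 + 4\gamma^2\E\| |x|Gf\|^2$, which, discarding the last (nonnegative) term and using $\nabla(Gf) = G\nabla f + f\nabla G$ together with the boundedness of $G$ and $\nabla G$, is $\ge -2M_0^2\,\E\|\nabla f\|^2 - 2M_1^2 H(t)$. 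For the cross term, $0 \le H_G(t) \le M_0^2 H(t)$ and $D(t) = -\E\|\nabla f\|^2 + 4\gamma^2\E\| |x|f\|^2 \le 4\gamma^2\E\| |x|f\|^2$; splitting into the cases $D \ge 0$ (where $-DH_G/H \ge -M_0^2 D \ge -4\gamma^2 M_0^2\E\| |x|f\|^2$) and $D < 0$ (where $-DH_G/H \ge 0$), in either case $-D(t)H_G(t)/H(t) \ge -4\gamma^2 M_0^2\E\| |x|f\|^2$. Adding up, the braced quantity is at least
\[
(8\gamma-2M_0^2)\,\E\|\nabla f\|^2+4\gamma^2(8\gamma-M_0^2)\,\E\| |x|f\|^2-2M_1^2 H(t),
\]
and the hypothesis $\gamma > M_0^2/4$ makes both leading coefficients strictly positive. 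Hence $\frac{d^2}{dt^2}[\log H(t)+Q(t)] \ge -4M_1^2$.

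Consequently $t \mapsto \log H(t) + Q(t) + 2M_1^2 t^2$ is convex on $[0,1]$ (if $H(0)$ or $H(1)$ is infinite, \eqref{eq:log-convex} is trivial, so assume both finite). Convexity gives $\log H(t) + Q(t) + 2M_1^2 t^2 \le (1-t)(\log H(0)+Q(0)) + t(\log H(1)+Q(1)+2M_1^2)$, and absorbing the bounded terms via $\|Q\|_\infty \le N(M+M^2+M_0^2)$ produces $\log H(t) \le (1-t)\log H(0) + t\log H(1) + N'(M+M^2+M_0^2+M_1^2)$; exponentiating gives \eqref{eq:log-convex}.

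The main obstacle is that everything above is formal: $f = e^{\gamma|x|^2}u$ need not lie in $L^2$, the It\^o formula for $\|f\|^2$ and the identities defining $D(t)$ and $D_G(t)$ rely on integrations by parts, and $H$ is not a priori twice differentiable. The justification I would carry out is: (i) run the whole computation with the regularized weight $\varphi_a$ of \eqref{eq:varphi-a-1}--\eqref{eq:varphi-a-2}, whose Hessian $\cD^2\varphi_a$ and bilaplacian $\Delta^2\varphi_a$ are bounded, so that the commutator identity, the $D_G$-estimate and the conclusion survive with harmless $O(a)$ corrections (exactly as in the rigorous part of the proof of Lemma~\ref{lem:lem4}); (ii) on a compact subinterval $[\e, 1-\e]$, invoke the interior regularity Lemma~\ref{lem:interiorReg} and Lemma~\ref{lem:lem4} to guarantee that $H, D, D_G, H_G$ and their time-derivatives are finite, so that the It\^o-formula and integration-by-parts manipulations---justified by the truncation and mollification scheme of \cite[Lemma~1]{Escauriaza2008}---are legitimate and the regularized quantity $\log H_a + Q_a + 2M_1^2 t^2$ is genuinely convex there; (iii) let $a \to 0$ (with $\varphi_a \uparrow |x|^2$, so $H_a \to H$ by monotone convergence, convexity being stable under pointwise limits) and then $\e \to 0$, using the energy estimate Lemma~\ref{lem:energy} and the continuity of $u$ to handle the endpoints. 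Step (ii) is precisely where the decay hypotheses of Assumption~\ref{assump:V-G-2} on $V$, $G$ and $\nabla G$ are needed---without them the interior-regularity input, and hence the entire justification, breaks down, as noted in the introduction; I expect this rigorous bookkeeping, rather than the formal algebra, to be the real difficulty.
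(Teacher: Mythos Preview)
Your formal computation and overall justification scheme match the paper's. One point in your sketch deserves sharpening: when you replace $|x|^2$ by $\varphi_a$, the Hessian $\cD^2\varphi_a$ carries a factor $|x|^{-a}$ for large $|x|$ (cf.\ \eqref{eq:D^2varphi}), so the commutator lower bound acquires an extra contribution of the form
\[
8\gamma\,\E\!\int_{|x|\ge\max\{\gamma,2\}}(|x|^{-a}-1)\bigl(|\nabla f_a|^2+\gamma^2|\nabla\varphi_a|^2|f_a|^2\bigr)\,dx,
\]
which is \emph{negative} and is not an $O(a)$ correction uniformly in $x$; this is a different mechanism from the rigorous part of Lemma~\ref{lem:lem4}, where the decay hypothesis on $G$ absorbs the analogous discrepancy pointwise. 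The paper disposes of this residual term by invoking Lemma~\ref{lem:lem4} not merely for finiteness but to conclude that it tends to $0$ for a.e.\ $t$ as $a\downarrow 0$, since that lemma controls $\int_0^1 t(1-t)\bigl(\E\|e^{\gamma|x|^2}\nabla u\|^2+\E\|e^{\gamma|x|^2}|x|u\|^2\bigr)\,dt$. With that adjustment your plan is exactly the paper's.
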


\begin{proof}
Let $f = e^{\gamma \varphi}u$, where $\varphi = \varphi(x)$ is to be chosen. Then a direct calculation from \eqref{eq:S-A} shows that
\begin{equation}
  \label{eq:est_D}
  (\cS f, f) = -\int_{\R^n} |\nabla f|^2 dx + \gamma^2\int_{\R^n} |\nabla \varphi|^2 |f|^2 dx \le \gamma^2\int_{\R^n} |\nabla \varphi|^2 |f|^2 dx,
\end{equation}
\begin{align}
  \label{eq:est_Dg}
  (\cS (Gf), Gf) & = -\int_{\R^n} |\nabla (Gf) |^2 dx + \gamma^2\int_{\R^n} |\nabla \varphi|^2 |Gf|^2 dx\notag\\
& \ge -2 \int_{\R^n} |\nabla f|^2 |G|^2 dx - 2\int_{\R^n} |\nabla G|^2 |f|^2 dx,
\end{align}
and
\begin{align}
  ([\cS,\cA]f,f) 
& = 4\gamma \int_{\R^n} \cD^2 \varphi \nabla f \cdot \nabla f dx + 4\gamma^3 \int_{\R^n} \cD^2\varphi \nabla\varphi\cdot \nabla\varphi |f|^2 dx\notag\\
& \qquad - \gamma \int_{\R^n}\Delta^2 \varphi |f|^2 dx\notag\\
& \ge 4\gamma \int_{\R^n} \cD^2 \varphi \nabla f \cdot \nabla f dx + 4\gamma^3 \int_{\R^n} \cD^2\varphi \nabla\varphi\cdot \nabla\varphi |f|^2 dx\notag\\
& \qquad - \gamma \|\Delta^2 \varphi\|_\infty  \int_{\R^n} |f|^2 dx\label{eq:est_SA}.
\end{align}
Then by \eqref{eq:est_D}, \eqref{eq:est_Dg} and \eqref{eq:est_SA}, we obtain
\begin{align}
  &  \frac{2}{H(t)}\, \bigg\{\E (\cS_tf+[\cS, \cA]f, f)+  D_G(t)  -\frac{D(t)H_G(t)}{H(t)}\bigg\}\notag\\
 \ge&\, \frac{2}{H(t)}\, \E \bigg \{
4\gamma \int_{\R^n} \cD^2 \varphi \nabla f \cdot \nabla f dx + 4\gamma^3 \int_{\R^n} \cD^2\varphi \nabla\varphi\cdot \nabla\varphi |f|^2 dx - \gamma \|\Delta^2 \varphi\|_\infty  \int_{\R^n} |f|^2 dx\notag\\
 &\qquad -2 \int_{\R^n} |\nabla f|^2 |G|^2 dx - 2\int_{\R^n} |\nabla G|^2 |f|^2 dx - \gamma^2 \|G\|^2_\infty \int_{\R^n} |\nabla \varphi|^2 |f|^2 dx\bigg\}\notag\\
 \ge&\, \frac{2}{H(t)}\, \E I(t)
-2\gamma \|\Delta^2\varphi\|_\infty -4 \|\nabla G\|^2_\infty,\label{eq:d^2H-f}
\end{align}
where $H, H_G, D$ and $D_G$ are defined in \eqref{eq:H-D}, and
\begin{equation}
  \label{eq:I}
  \begin{aligned}
  I(t)& =: 4\gamma \int_{\R^n} \mathcal{D}^2 \varphi \nabla f \cdot \nabla f dx -2 \int_{\R^n} |\nabla f|^2 |G|^2 dx 
\\
& + 4\gamma^3 \int_{\R^n} \cD^2\varphi \nabla\varphi\cdot \nabla\varphi |f|^2 dx - \gamma^2 \|G\|^2_\infty \int_{\R^n} |\nabla \varphi|^2 |f|^2 dx.
  \end{aligned}
\end{equation}

Now let $\varphi(x) = |x|^2$. Then $\nabla \varphi = 2x$, $\Delta^2 \varphi =0$, and $ \cD^2\varphi = 2I_n$, where $I_n$ is an $n\times n$ identity matrix. So in this case, 
\begin{align*}
  I(t) & = 8 \gamma \int_{\R^n} |\nabla f|^2\, dx - 2 \int_{\R^n} |\nabla f|^2 |G|^2\, dx\\
& \qquad  + 32 \gamma^3 \int_{\R^n} |x|^2 |f|^2\, dx - 4\gamma^2 \|G\|^2_\infty \int_{\R^n} |x|^2 |f|^2\, dx\\
& \ge  2(4\gamma - \|G\|^2_\infty) \int_{\R^n} |\nabla f|^2\, dx + 4\gamma^2(8\gamma -\|G\|^2_\infty) \int_{\R^n} |x|^2 |f|^2\, dx\\
& \ge 0,
\end{align*}
since $\gamma> \|G\|^2_\infty/4$ by the assumption. If we at the moment assume the formal calculations in Lemma \ref{lem:absLem}, and the computations yielding $I(t) \ge 0$ are correct (in fact, later we only show that $\E I(t) \ge 0$ for almost every $t\in [0,1]$, which is sufficient for our purpose), it follows from \eqref{eq:d^2H} and \eqref{eq:d^2H-f} that
\begin{equation}
  \label{eq:d^2log}
  \frac{d^2}{dt^2}[\log H(t) + \wt{Q}(t)] \ge 0,
\end{equation}
with a universal constant $N$ and a function $\wt{Q}=\wt{Q}(t)$ satisfying
\[
  \|\wt{Q}\|_\infty \le N(M+M^2+M_0^2+M_1^2),
\]
and thus we have \eqref{eq:log-convex}.

Next, we do the justification for the calculations involved above. To this end, we use the same mollification as in Lemma \ref{lem:lem4}, and set $f_a = e^{\gamma \varphi_a}u$, where $\varphi_a$ satisfies \eqref{eq:varphi-a-1} and \eqref{eq:varphi-a-2}. Using \eqref{eq:D^2varphi} yields
\begin{align}
&  \int_{\R^n} \cD^2\varphi_a \nabla \varphi_a\cdot\nabla \varphi_a |f_a|^2\, dx\notag\\
= &\, \int_{|x|\le \max\{\gamma,2\}} \cD^2\varphi_a \nabla \varphi_a\cdot \nabla\varphi_a |f_a|^2\, dx + \int_{|x|\ge \max\{\gamma,2\}} \cD^2\varphi_a \nabla \varphi_a\cdot\nabla \varphi_a |f_a|^2\, dx\notag\\
\ge&\, 2 \int_{|x|\le \max\{\gamma,2\}} |\nabla \varphi_a|^2 |f_a|^2\, dx +  n \int_{|x|\le \max\{\gamma,2\}} O(a)|\nabla \varphi_a|^2|f_a|^2\, dx\notag\\
& \quad + 2 \int_{|x|\ge \max\{\gamma,2\}}|x|^{-a} |\nabla \varphi_a|^2|f_a|^2\, dx +  n \int_{|x|\ge \max\{\gamma,2\}} O(a) |\nabla \varphi_a|^2|f_a|^2\, dx\notag\\
\ge&\,  2 \int_{\R^n} |\nabla \varphi_a|^2|f_a|^2\, dx +  n \int_{\R^n} O(a)|\nabla \varphi_a|^2|f_a|^2\, dx\notag\\
& \quad + 2 \int_{|x|\ge \max\{\gamma,2\}} (|x|^{-a}-1)|\nabla \varphi_a|^2|f_a|^2\, dx.\label{eq:0327-2}
\end{align}
Then we obtain \eqref{eq:d^2H-f} from \eqref{eq:0327-1} and \eqref{eq:0327-2} with $I_a$ such that
\begin{align}
  \label{eq:I_a}
  I_a(t) 
& \ge 2\int_{\R^n} (4\gamma + n O(a) - \|G\|^2_\infty)|\nabla f_a|^2\, dx\notag\\
& \qquad + 8\gamma\int_{|x|\ge \max\{\gamma,2\}} (|x|^{-a}-1) |\nabla f_a|^2\, dx\notag\\
& \qquad + \gamma^2\int_{\R^n} (8\gamma + n O(a) - \|G\|^2_\infty)|\nabla \varphi_a|^2|f_a|^2\, dx\notag\\
& \qquad + 8\gamma^3 \int_{|x|\ge \max\{\gamma,2\}} (|x|^{-a}-1)|\nabla \varphi_a|^2|f_a|^2\, dx.
\end{align}
It follows from the interior regularity that the calculations leading to \eqref{eq:0327-1} and \eqref{eq:0327-2} are justified. In particular, the right hand side of \eqref{eq:I_a} is finite for $t\in [\e, 1]$. By the fact that $\gamma > \|G\|^2_\infty/4$, and for $a$ small enough, we obtain $\E I_a(t) \ge F_a(t)$, where
\begin{equation}
  \label{eq:F_a-2}
  F_a(t):= 8\gamma \E \int_{|x|\ge \max\{\gamma,2\}}(|x|^{-a}-1)(|\nabla f_a|^2 + \gamma^2 |\nabla \varphi_a|^2 |f_a|^2)\, dx.
\end{equation}
From Lemma \ref{lem:lem4}, we notice that $F_a(t)$ converges to zero as $a$ decreases to zero, for almost every $t\in [0,1]$. 
Therefore, using \eqref{eq:d^2H} in Lemma \ref{lem:absLem} and \eqref{eq:d^2H-f}, we arrive at
\[
  \frac{d^2}{dt^2}[\log H_a(t) + Q(t)]\ge O(a) - 2\gamma \|\Delta^2\varphi_a\|_\infty - 4\|\nabla G\|^2_\infty, \quad t\in [\e,1], ~\text{a.e.},
\]
which implies the logarithmic convexity for $H_a(t)$ for $t\in [\e, 1]$. By sending $a$ and $\e$ to zero, we complete the proof.
\end{proof}

\begin{remark}
  \label{rem:G(t)}
If $G= G(t)$ is a function in $t$, independent of the space variable $x$, the term $D_G(t) - D(t) H_G(t)/H(t)$ on the left hand side of \eqref{eq:d^2H-f},  will disappear. So we can deal with the commutator part as the deterministic case in \cite{Escauriaza2008}, and get rid of Assumption \ref{assump:V-G} as well as the restriction on $\gamma$ to arrive at the logarithmic convexity. Moreover, it is easy to see that the noise does not play a role in the calculations of the next section, so for space-independent noise the deterministic result still holds in the stochastic setting.
\end{remark}

\section{Proof of main result}

It is noted that in Lemma \ref{lem:lem4} and \ref{lem:lem3}, we require that the solution $u$ has the same quadratic exponential decay for $t_0=0$ and $t_1=1$, but Theorem \ref{th:main} assumes no decay for the initial data. In order to overcome this issue, we introduce the following conformal transformation, also known as {\it Appell transformation} for our stochastic equation. 
\begin{lemma}
  \label{lem:conTrans}
Assume $u(t,x)$ verifies
\[
  du = (\Delta u + V(t,x)u)\, dt + G(t,x)u\, dW(t),\quad (t,x)\in [0,1]\times \R^n.
\]
Let $\alpha,\beta>0$ and set
\begin{equation}
  \label{eq:y-u}
y(t,x) = [a(t)]^{\frac{n}{2}} u(b(t),a(t)x) e^{\frac{a(t)\kappa |x|^2}{4}},
\end{equation}
where
\[
  \kappa = \sqrt{\frac{\alpha}{\beta}}- \sqrt{\frac{\beta}{\alpha}}, \quad a(t) = \frac{\sqrt{\alpha\beta}}{\alpha(1-t)+\beta t}\quad\text{and}\quad b(t) = \sqrt{\frac{\beta}{\alpha}}a(t) t.
\]
then $y$ verifies 
\begin{equation}
  \label{eq:y}
  dy = (\Delta y + \widetilde{V}  y) dt + \wt{g}y\, dW(b(t)),
\end{equation}
with
\[
  \wt{V}(t,x) = [a(t)]^2 V(b(t),a(t)x),\quad\text{and}\quad \wt{g}(t,x) = G(b(t),a(t)x).
\]
Moreover, for any $\gamma\in\R$,
\begin{equation}
  \label{eq:y-u norm}
  \E \|e^{\gamma |\cdot|^2} y(t)\|^2 = \E \|e^{(\gamma a^2(1-s) + \kappa a(1-s)/4)|\cdot|^2} u(s)\|^2,
\end{equation}
where $s = b(t)$.

\end{lemma}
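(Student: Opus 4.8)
The plan is to verify \eqref{eq:y} directly by \Ito's formula, reading the transformation \eqref{eq:y-u} as the composition of a deterministic time change $s=b(t)$, a deterministic spatial dilation $z=a(t)x$, and multiplication by the deterministic weight $c(t,x):=[a(t)]^{n/2}e^{a(t)\kappa|x|^2/4}$. Since $y$ is \emph{linear} in $u$ and all three ingredients are deterministic and smooth, the transformation produces no second-order \Ito\ correction of its own, so the computation reduces to the classical Appell/conformal identity for the heat equation plus careful bookkeeping for the time-changed Brownian motion. Before starting I would record the algebraic identities that make everything collapse (all obtained by plugging in the explicit formulae for $a,b,\kappa$): $b(0)=0$, $b(1)=1$, $b$ is a smooth strictly increasing bijection of $[0,1]$, and
\[
b'(t)=[a(t)]^2,\qquad a'(t)=[a(t)]^2\kappa,\qquad \nabla_x c=\tfrac{a(t)\kappa}{2}\,x\,c,\qquad \Delta_x c=\partial_t c,
\]
together with the duality $a(1-b(t))=1/a(t)$.

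\textbf{The \Ito\ computation.} First handle the time change: for fixed $z$, write the integral form of \eqref{eq:main eqn}, substitute $s=b(t)$, change variables $r=b(\tau)$ in the Lebesgue integral, and use the time-change formula for stochastic integrals — since $b$ is deterministic, smooth and strictly increasing, $\widetilde W(t):=W(b(t))$ is a continuous martingale with respect to $\{\cF_{b(t)}\}$ with $\langle\widetilde W\rangle_t=b(t)$ and $\int_0^{b(t)}\sigma(r,z)\,dW(r)=\int_0^t\sigma(b(\tau),z)\,dW(b(\tau))$. This yields, for fixed $z$,
\[
d\bigl[u(b(t),z)\bigr]=[a(t)]^2\bigl(\Delta_z u+Vu\bigr)(b(t),z)\,dt+(Gu)(b(t),z)\,dW(b(t)).
\]
Next substitute $z=a(t)x$; as $t\mapsto a(t)x$ has finite variation there is no cross-variation with the martingale part, and the chain rule adds only the drift $a'(t)\,x\cdot\nabla_z u(b(t),a(t)x)\,dt$. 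Multiplying by $c(t,x)$ (product rule, again no \Ito\ correction since $c$ is deterministic) gives
\[
dy=\Bigl[(\partial_t c)\,u+c\,[a(t)]^2(\Delta_z u+Vu)+c\,a'(t)\,x\cdot\nabla_z u\Bigr]dt+c\,(Gu)(b(t),a(t)x)\,dW(b(t)).
\]
The noise coefficient is $\widetilde g(t,x)\,y(t,x)$ because $c\,(Gu)(b(t),a(t)x)=G(b(t),a(t)x)\cdot c\,u$ and $\widetilde g(t,x)=G(b(t),a(t)x)$. For the drift, expand $\Delta_x y=(\Delta_x c)u+2\nabla_x c\cdot\nabla_x u+c\,\Delta_x u$ using $\nabla_x u=a\nabla_z u$ and $\Delta_x u=a^2\Delta_z u$ (evaluated at $z=a(t)x$): the term $c\,a^2\Delta_z u$ matches $c\,\Delta_x u$; the term $c\,a'x\cdot\nabla_z u$ matches $2\nabla_x c\cdot\nabla_x u$ by $a'=a^2\kappa$ and $\nabla_x c=\tfrac{a\kappa}{2}xc$; and $(\partial_t c)u$ matches $(\Delta_x c)u$ by $\Delta_x c=\partial_t c$. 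Hence the drift equals $\Delta_x y+[a(t)]^2V(b(t),a(t)x)\,y=\Delta_x y+\widetilde V y$, which is \eqref{eq:y}.

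\textbf{The norm identity.} For \eqref{eq:y-u norm} one simply changes variables $z=a(t)x$ in $\E\int_{\R^n}e^{2\gamma|x|^2}|y(t,x)|^2\,dx$; using $|y(t,x)|^2=[a(t)]^n|u(b(t),a(t)x)|^2e^{a(t)\kappa|x|^2/2}$ and $dz=[a(t)]^n\,dx$ turns the Gaussian weight into $\exp\bigl(\bigl(2\gamma/[a(t)]^2+\kappa/(2a(t))\bigr)|z|^2\bigr)$, and the duality $1/a(t)=a(1-s)$ with $s=b(t)$ rewrites the exponent as $2\bigl(\gamma[a(1-s)]^2+\tfrac{\kappa}{4}a(1-s)\bigr)|z|^2$, which is the claimed right-hand side.

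\textbf{Justification and the main obstacle.} The formal \Ito\ manipulations above require enough spatial smoothness of $u$ to apply the chain rule in $x$, which is exactly what the interior regularity of Lemma \ref{lem:interiorReg} provides on $(0,1]$; near $t=0$ one uses continuity of both sides in $C_\cF([0,1];L^2(\R^n))$. I expect the delicate point to be not the Appell algebra — which is routine once $b'=a^2$, $a'=a^2\kappa$ and $\Delta_x c=\partial_t c$ are in hand — but the stochastic-calculus bookkeeping for the deterministic time change: giving $dW(b(t))$ a rigorous meaning and commuting the time change with the \Ito\ integral, together with the usual truncation/mollification procedure (as in the proofs of Lemmas \ref{lem:energy} and \ref{lem:lem4}) needed to pass from smooth approximations of $u$ to the weak solution of \eqref{eq:main eqn}.
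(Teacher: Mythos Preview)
Your proposal is correct and follows essentially the same route as the paper: both compute $dy$ via \Ito's formula, handle the time change $s=b(t)$ through the time-change formula for stochastic integrals (the paper cites \cite{Revuz1999}), and then use the identities $b'=a^2$ and $a'=\kappa a^2$ to reduce the drift to $\Delta y+\widetilde V y$. Your organization is slightly cleaner—packaging the cancellation as $\Delta_x c=\partial_t c$ rather than writing out both sides of $dy-\Delta y\,dt$ and comparing—but the underlying computation is identical, and your norm identity via the duality $a(1-s)=1/a(t)$ matches the paper's ``$a(t)a(s)=1$'' (same fact, different notation).
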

\begin{proof}
  By \Ito's formula,
\begin{equation*}
  dy = \frac{n}{2} a^{\frac{n}{2}-1} a' u e^{\frac{a\kappa |x|^2}{4}} dt + a^{\frac{n}{2}} e^{\frac{a\kappa |x|^2}{4}} du(b(t),a(t)x) + a^{\frac{n}{2}} a' \frac{\kappa |x|^2}{4} e^{\frac{a\kappa |x|^2}{4}}u\, dt,
\end{equation*}
where
\[
  du(b(t),a(t)x) = du(b(t),z)\big |_{z=a(t)x} + du(\tau,a(t),x) \big |_{\tau = b(t)}.
\]
On one hand, observing that
\begin{align*}
  u(b(t),z) 
& = u(0,z) + \int_0^{b(t)} [\Delta u+ Vu](s,z)\, ds + \int_0^{b(t)} Gu(s,z)\, dW(s)\\
& = u(0,z) + \int_0^t [\Delta u + Vu](b(s),z) b'(s)\, ds + \int_0^t Gu(b(s),z)\, dW(b(s)),
\end{align*}
where the second equality follows from the time change formula for Brownian motions, or more generally, local martingales, see for example \cite[Proposition 1.5, page 181]{Revuz1999},
and
\[
  du(\tau,a(t)x) = a'(t) \nabla u(\tau,a(t)x) \cdot x\, dt,
\]
we have
\begin{multline}
  \label{eq:ae-dy}
  a^{-\frac{n}{2}}e^{-\frac{a\kappa |x|^2}{4}} dy =  \frac{n}{2} a^{-1} a' u\, dt + \left[(\Delta u + Vu)b' + a'\nabla u\cdot x + a' \frac{\kappa |x|^2}{4} u\right]\,dt
 +  G u \, dW(b(t)).
\end{multline}
On the other hand, 
\begin{equation}
  \label{eq:yxx}
a^{-\frac{n}{2}}e^{-\frac{a\kappa |x|^2}{4}} \Delta y 
= a^{2} \Delta u + \kappa a^{2} \nabla u\cdot x + \frac{n\kappa}{2}a u+ \frac{\kappa^2}{4}a^2 |x|^2 u.
\end{equation}
Then it follows from \eqref{eq:ae-dy} and \eqref{eq:yxx} that
\begin{multline*}
 a^{-\frac{n}{2}}e^{-\frac{a\kappa |x|^2}{4}}(dy -\Delta y\, dt)\\
 = \big[(b'-a^2)\Delta u + (a'-\kappa a^2) \nabla\cdot x + \frac{n}{2}a^{-1}(a'-\kappa a^2) u + \frac{\kappa}{4}(a'-a) |x|^2 u \big]\, dt \\
+ b'Vu \, dt + G u\, dW(b(t)).
\end{multline*}
Using the identities $a' = \kappa a^2$ and $b' = a^2$, we obtain \eqref{eq:y}. Then relation \eqref{eq:y-u norm} is a consequence of the transform \eqref{eq:y-u} and the fact that $a(t)a(s) =1$.

\end{proof}

\begin{remark}
  In Lemma \ref{lem:conTrans}, if $u$ is $\cF_t$-adapted, then $y$ defined in \eqref{eq:y} is $\cF_{b(t)}$-adapted.
\end{remark}
 
It is known that the conformal transformation $y$ is equivalent in probability law to the process $\wt{u}$ satisfying
\begin{equation}
  \label{eq:u-tilde}
  d\wt{u} = (\Delta \wt{u} + \wt{V}(t,x)\wt{u}) dt + \wt{G}(t,x) \wt{u}\, d\wt{W}(t),
\end{equation}
where
\begin{equation}
  \label{eq:VG}
   \wt{V}(t,x) = [a(t)]^2 V(b(t),a(t)x),\quad \wt{G}(t,x) = G(b(t),a(t)x)\sqrt{b'(t)},
\end{equation}
and $\wt{W}$ is another Wiener process. Since the norms we consider are under the probability expectation, we may by a slight abuse of notation denote by $\wt{u}$ the conformal transformation of $u$. Then we have
\[
  \E \|e^{\gamma |x|^2}\wt{u}(0)\|^2 = \E \|u(0)\|^2, \quad \E \|e^{\gamma |x|^2}\wt{u}(1)\|^2 = \E \| e^{|x|^2/\delta^2} u(1)\|^2,
\]
by choosing $\alpha = 1, \beta = 1 + 4\gamma$ and $\gamma = 1/(2\delta)$ in Lemma \ref{lem:conTrans}. We also have 
\[
  \|\wt{V}\|_\infty\le (1+ 4\gamma) \|V\|_\infty,\quad \|\wt{G}\|_\infty \le \sqrt{1+4\gamma}\|G\|_\infty,\quad\text{and}\quad \|\nabla \wt{G}\|_\infty \le (1+4\gamma) \|\nabla G\|_\infty,
\]
by the identities $b' = a^2$ and $\|a\|_\infty = \sqrt{1+4\gamma}$.

Finally, we are ready to prove our main theorem.

\begin{proof}[Proof of Theorem \ref{th:main}]

Fix $R>0$. For $\gamma = 1/(2\delta)> 1/2$, we can find $\mu$ and $\e$ such that 
\begin{equation}
  \label{eq:mu-e}
  \frac{1}{2(1-\e)}< \mu < \gamma.
\end{equation}
 Define 
\[
  \varphi(t,x) = \mu|x+Rt(1-t)e_1|^2+\frac{R^2t(1-t)(1-2t)}{6}-\frac{R^2t(1-t)}{16\mu},
\]
and write $H_\mu(t)=\mathbb{E}\|f(t)\|^2$ with $f=e^{\varphi}\wt{u}$, where $\wt{u}$ is defined in \eqref{eq:u-tilde}.

Next, we show that $H_\mu(t)$ is a logarithmically convex function. It follows from \eqref{eq:VG} that $\wt{V}$ and $\wt{G}$ satisfy Assumption \ref{assump:V-G-2}, as long as $V$ and $G$ satisfy Assumption \ref{assump:V-G}. Therefore, we can apply previous lemmas and the interior regularity to show that the subsequent formal computations are correct.

 The equation satisfied by $f$ can be written as
\[
df=(\mathcal{S}f+\mathcal{A}f+\wt{V}f)dt+\wt{G}f\,d\wt{W}(t),
\]
where $\wt{V}, \wt{G}$ are defined in \eqref{eq:VG}, and
\begin{gather*}
\mathcal{S}=\Delta+4\mu^2|x+Rt(1-t)e_1|^2+2\mu R(1-2t)(x_1+Rt(1-t))\\+(t^2-t+\frac16)R^2-\frac{R^2(1-2t)}{16\mu},\\
\mathcal{A}=-4\mu(x+Rt(1-t)e_1)\cdot\nabla-2\mu n,
\end{gather*}
are symmetric and skew-symmetric operators, respectively. We also have
\begin{multline*}
\mathcal{S}_t+[\mathcal{S},\mathcal{A}]=-8\mu\Delta+32\mu^3|x+Rt(1-t)e_1|^2+2\mu R^2(1-2t)^2\\+4\mu R(4\mu(1-2t)-1)(x_1+Rt(1-t))+(2t-1)R^2+\frac{R^2}{8\mu},
\end{multline*}
where $\cS_t$ is an operator satisfying \eqref{eq:S_t}. Thus, we have that
\[\begin{aligned}
\mathbb{E}(\mathcal{S}_tf+[\mathcal{S},\mathcal{A}]f,f)
&=32\mu^3\mathbb{E}\int_{\R^n}\left|x+Rt(1-t)e_1+\frac{(4\mu(1-2t)-1)R}{16\mu^2}e_1\right|^2|f|^2\,dx\\
&\quad+8\mu\mathbb{E}\int_{\R^n}|\nabla f|^2\,dx.
\end{aligned}\]
While, on the other hand, for a general function $v$ we have
\[\begin{aligned}
(\mathcal{S}v,v)=&-\int_{\R^n}|\nabla v|^2\, dx +4\mu^2\int_{\R^n}|x+Rt(1-t)e_1|^2|v|^2\, dx+2\mu R(1-2t)\int_{\R^n}(x_1+Rt(1-t))|v|^2\, dx
\\&+R^2(t^2-t+\frac16)\int_{\R^n} |v|^2\, dx -\frac{R^2(1-2t)}{16\mu}\int_{\R^n} |v|^2\, dx\\
=&-\int_{\R^n} |\nabla v|^2\, dx+4\mu^2\int_{\R^n} \left|x+Rt(1-t)e_1+\frac{(4\mu(1-2t)-1)R}{16\mu^2}e_1\right|^2|v|^2\, dx
\\&+\frac{R}{2}\int_{\R^n} (x_1+Rt(1-t))|v|^2\, dx -R^2\left(\frac{1}{12}-\frac{1-2t}{16\mu}+\frac{1}{64\mu^2}\right)\int_{\R^n}|v|^2\, dx.
\end{aligned}\]
Therefore,
\[\begin{aligned}
& \mathbb{E}(\mathcal{S}_tf+[\mathcal{S},\mathcal{A}]f,f)H_\mu(t)+\mathbb{E}(\mathcal{S}(\wt{G}f),\wt{G}f)H_\mu(t) -\mathbb{E}(\mathcal{S}f,f)\mathbb{E}\|\wt{G}f\|^2
\\
=&\, 32\mu^3\mathbb{E}\int_{\R^n}\left|x+Rt(1-t)e_1+\frac{(4\mu(1-2t)-1)R}{16\mu^2}e_1\right|^2|f|^2\, dx H_\mu(t)
\\
&\quad + 4\mu^2\mathbb{E}\int_{\R^n} \left|x+Rt(1-t)e_1+\frac{(4\mu(1-2t)-1)R}{16\mu^2}e_1\right|^2|\wt{G}f|^2\, dx  H_\mu(t)
\\&\quad 
-4\mu^2\mathbb{E}\int_{\R^n} \left|x+Rt(1-t)e_1+\frac{(4\mu(1-2t)-1)R}{16\mu^2}e_1\right|^2|f|^2\, dx \mathbb{E}\int_{\R^n}|\wt{G}f|^2\, dx
\\&
\quad+\left(\frac{R}{2}\mathbb{E}\int_{\R^n}(x_1+Rt(1-t))|\wt{G}f|^2\, dx -R^2\left(\frac{1}{12}-\frac{1-2t}{16\mu}+\frac{1}{64\mu^2}\right)\mathbb{E}\int_{\R^n}|\wt{G}f|^2\, dx\right)H_\mu(t)
\\&
\quad-\left(\frac{R}{2}\mathbb{E}\int_{\R^n}(x_1+Rt(1-t))|f|^2\, dx-R^2\left(\frac{1}{12}-\frac{1-2t}{16\mu}+\frac{1}{64\mu^2}\right)\mathbb{E}\int_{\R^n}|f|^2\, dx\right)\mathbb{E}\int_{\R^n}|\wt{G}f|^2\, dx
\\&
\quad +8\mu\mathbb{E}\int_{\R^n}|\nabla f|^2\, dx H_\mu(t)-\mathbb{E}\int_{\R^n}|\nabla(\wt{G}f)|^2\, dx H_\mu(t)+\mathbb{E}\int_{\R^n}|\nabla f|^2\, dx \mathbb{E}\int_{\R^n}|\wt{G}f|^2\, dx\\
=&\,32\mu^3\mathbb{E}\int_{\R^n}\left|x+Rt(1-t)e_1+\frac{(4\mu(1-2t)-1)R}{16\mu^2}e_1\right|^2|f|^2\, dx H_\mu(t)
\\
&\quad + 4\mu^2\mathbb{E}\int_{\R^n} \left|x+Rt(1-t)e_1+\frac{(4\mu(1-2t)-1)R}{16\mu^2}e_1\right|^2|\wt{G}f|^2\, dx  H_\mu(t)
\\&\quad 
-4\mu^2\mathbb{E}\int_{\R^n} \left|x+Rt(1-t)e_1+\frac{(4\mu(1-2t)-1)R}{16\mu^2}e_1\right|^2|f|^2\, dx \mathbb{E}\int_{\R^n}|\wt{G}f|^2\, dx
\\&
\quad+\frac{R}{2}\mathbb{E}\int_{\R^n}x_1|\wt{G}f|^2\, dx H_\mu(t) -\frac{R}{2}\mathbb{E}\int_{\R^n}x_1|f|^2\, dx \mathbb{E}\int_{\R^n}|\wt{G}f|^2\, dx
\\&
\quad +8\mu\mathbb{E}\int_{\R^n}|\nabla f|^2\, dx H_\mu(t)-\mathbb{E}\int_{\R^n}|\nabla(\wt{G}f)|^2\, dx H_\mu(t)+\mathbb{E}\int_{\R^n}|\nabla f|^2\, dx \mathbb{E}\int_{\R^n}|\wt{G}f|^2\, dx.
\end{aligned}\]
Then we can easily estimate the latter to get
\[\begin{aligned}
& \mathbb{E}(\mathcal{S}_tf+[\mathcal{S},\mathcal{A}]f,f) H_\mu(t) +\mathbb{E}(\mathcal{S}(\wt{G}f),\wt{G}f)H_\mu(t)-\mathbb{E}(\mathcal{S}f,f)\mathbb{E}\|\wt{G}f\|^2
\\
\ge &\,  (32\mu^3-4\mu^2\|\wt{G}\|_\infty^2)\mathbb{E}\int_{\R^n}\left|x+Rt(1-t)e_1+\frac{(4\mu(1-2t)-1)R}{16\mu^2}e_1\right|^2|f|^2\, dx H_\mu(t)
\\&+4\mu^2\mathbb{E}\int_{\R^n} \left|x+Rt(1-t)e_1+\frac{(4\mu(1-2t)-1)R}{16\mu^2}e_1\right|^2|\wt{G}f|^2\, dx H_\mu(t)
\\&+(8\mu-2\|\wt{G}\|_\infty^2)\mathbb{E}\int_{\R^n}|\nabla f|^2\, dx H_\mu(t) -2\|\nabla \wt{G}\|_\infty^2 H^2_\mu(t)
\\&+\frac{R}{2}\mathbb{E}\int_{\R^n} x_1|\wt{G}f|^2\, dx H_\mu(t) -\frac{R\|\wt{G}\|_\infty^2}{2}\mathbb{E}\int_{\R^n} x_1|f|^2\, dx H_\mu(t).
\end{aligned}\]

We now need to control the last two integrals by some expression of the type $-C H_\mu^2(t)$. In order to do so, let us fix $\mu$ satisfying \eqref{eq:mu-e}  and define
\[
m_\mu = \sup_{t\in [0,1]}\left|t(1-t)+\frac{4\mu(1-2t)-1}{16\mu^2}\right|.
\]
By some elementary analysis, we obtain
\begin{equation}
  \label{eq:mmu}
  m_\mu = \begin{cases}
    \frac{1}{4},&\mbox{if}~\mu \ge \frac{1+\sqrt{2}}{2},\\
    \frac{4\mu+1}{16\mu^2}, &\mbox{if}~\frac{1}{2}<\mu< \frac{1+\sqrt{2}}{2}.
  \end{cases}
\end{equation}

Let us first focus on the following integrals
\[
I_1 := 4\mu^2\int_{\R^n} \left|x+Rt(1-t)e_1+\frac{(4\mu(1-2t)-1)R}{16\mu^2}e_1\right|^2|\wt{G}f|^2\, dx + \frac{R}{2}  \int_{\R^n} x_1|\wt{G}f|^2\, dx.
\]
We split these integrals into the regions $\{|x|>\alpha R\}$ and $\{|x|\le \alpha R\}$, where $\alpha>m_\mu$ is to be chosen later. In the first region,
\[
\left|x+Rt(1-t)e_1+\frac{(4\mu(1-2t)-1)R}{16\mu^2}e_1\right|\ge|x|-m_\mu R> \left(1-\frac{m_\mu}{\alpha}\right)|x|>(\alpha - m_\mu)R,
\] 
and thus,
\[\begin{aligned}
&4\mu^2\int_{|x|>\alpha R} \left|x+Rt(1-t)e_1+\frac{(4\mu(1-2t)-1)R}{16\mu^2}e_1\right|^2|\wt{G}f|^2\, dx +\frac{R}{2} \int_{|x|>\alpha R} x_1|\wt{G}f|^2\, dx.
\\
& \ge \left(4\mu^2\left(1-\frac{m_\mu}{\alpha}\right)(\alpha - m_\mu) -\frac{1}{2}\right)R\int_{|x|>\alpha R}|x||\wt{G}|^2\, dx\\
& \ge 0,
\end{aligned}\]
if $\alpha$ satisfies
\begin{equation}
  \label{eq:alpha-mu}
  4\mu^2(\alpha - m_\mu)^2\ge \frac{\alpha}{2}.
\end{equation}
On the other hand, if $|x|\le \alpha R$,
\begin{equation}\label{xger2}
\frac{R}{2}\int_{|x|\le \alpha R} x_1|\wt{G}f|^2\, dx \ge -\frac{\alpha R^2\|\wt{G}\|_\infty^2}{2}\int_{\R^n}|f|^2\, dx.
\end{equation}
So we have
\[
  I_1 \ge -\frac{\alpha R^2\|\wt{G}\|_\infty^2}{2}\int_{\R^n}|f|^2\, dx,
\]
as long as the inequality \eqref{eq:alpha-mu} holds.

We are going to use the same approach now with
\begin{multline*}
I_2 := (32\mu^3-4\mu^2\|\wt{G}\|_\infty^2)\int_{\R^n}\left|x+Rt(1-t)e_1+\frac{(4\mu(1-2t)-1)R}{16\mu^2}e_1\right|^2|f|^2\, dx\\-\frac{R\|\wt{G}\|_\infty^2}{2}\int_{\R^n} x_1|f|^2\, dx.
\end{multline*}
Studying both integrals in the region $\{|x|>\alpha R\}$ as before, we get that the difference is non-negative if
\[
(32\mu^3-4\mu^2\|\wt{G}\|_\infty^2)\left(1-\frac{m_\mu}{\alpha}\right)(\alpha - m_\mu)-\frac{\|\wt{G}\|_\infty^2}{2}\ge 0,
\]
or equivalently,
\begin{equation}
  \label{eq:G^2}
   \|\wt{G}\|_\infty^2 \le \frac{64\mu^3(\alpha - m_\mu)^2}{8\mu^2(\alpha-m_\mu)^2+\alpha}.
\end{equation}
But we notice that $\|\wt{G}\|^2_\infty <4 \mu$ implies the inequality \eqref{eq:G^2} by \eqref{eq:alpha-mu}. 
Furthermore, for $|x|\le \alpha R$ we have the same bound as in \eqref{xger2}. Thus, in this case, we again have
\[
  I_2 \ge -\frac{\alpha R^2\|\wt{G}\|_\infty^2}{2}\int_{\R^n}|f|^2\, dx,
\]
as long as $\|\wt{G}\|_\infty^2 < 4\mu$.

Therefore,
\[
  \mathbb{E}(\mathcal{S}_tf+[\mathcal{S},\mathcal{A}]f,f) H_\mu(t) +\mathbb{E}(\mathcal{S}(\wt{G}f),\wt{G}f)H_\mu(t)-\mathbb{E}(\mathcal{S}f,f)\mathbb{E}\|\wt{G}f\|^2 \ge -\alpha R^2\|\wt{G}\|_\infty^2-2\|\nabla \wt{G}\|_\infty^2.
\]
It follows from \eqref{eq:d^2H} in Lemma \ref{lem:absLem} that
\begin{equation}\label{eq2}
\frac{d^2}{dt^2}(\log H_\mu(t)+Q(t))\ge-2\alpha R^2\|\wt{G}\|_\infty^2-4\|\nabla \wt{G}\|_\infty^2,
\end{equation}
or, in other words
\[
\frac{d^2}{dt^2}\left(\log H_\mu(t)+Q(t)- \alpha R^2\|\wt{G}\|_\infty^2 t(1-t)-2\|\nabla \wt{G}\|_\infty^2t(1-t)\right)\ge0,
\]
where $Q$ is a function satisfying \eqref{eq:Q}. Thanks to this property, now we get the following ``logarithmic convexity'' for $H_\mu(t)$, i.e.,
\[
H_\mu(t)\le e^{N(\|\wt{V}\|_\infty+\|\wt{V}\|_\infty^2+\|\wt{G}\|_\infty^2+\|\nabla \wt{G}\|_\infty^2)+\frac{\alpha R^2\|\wt{G}\|_\infty^2}{4}}H_\mu(0)^{1-t}H_\mu(1)^t.
\]

Finally, let us prove the uniqueness. We notice that, if $t=1/2$,
\[\begin{aligned} H_\mu(1/2) 
& = \mathbb{E}\int_{\mathbb{R}^n}e^{2\mu|x+\frac{R}{4}e_1|^2-\frac{R^2}{32\mu}}|u(1/2)|^2dx\\
& \ge \mathbb{E}\int_{|x|\le \e R/4}e^{2\mu|x+\frac{R}{4}e_1|^2-\frac{R^2}{32\mu}}|u(1/2)|^2dx\\
&\ge e^{\frac{R^2(4(1-\e)^2\mu^2-1)}{32\mu}} \mathbb{E}\int_{|x|\le \e R/4}|u(1/2)|^2dx.
\end{aligned}\]
Hence, we conclude that
\[
\mathbb{E}\int_{|x|\le \e R/4}|u(1/2)|^2dx\le Ce^{\frac{\alpha R^2\|\wt{G}\|_\infty^2}{4}-\frac{R^2(4(1-\e)^2\mu^2-1)}{32\mu}}.
\]
Letting $\mu$ increase to $\gamma$ and then $R$ tend to infinity, one gets that $u\equiv0$ when
\[
\gamma >\frac{1}{2},\ \ \|\wt{G}\|_\infty^2 <\min\left\{4\gamma, \frac{4\gamma^2-1}{8\alpha \gamma}\right\}.
\]

In the end, let us find $\alpha$ (the smallest possible should be the best) and determine more precisely the upper bound for $\wt{G}$. In fact, it suffices to solve a quadratic equation from \eqref{eq:alpha-mu} and choose $\alpha$ to be the root greater than $m_\mu$, i.e.,
\[
  \alpha = \frac{1+16\mu^2 m_\mu + \sqrt{1+ 32 \mu^2 m_\mu}}{16\mu^2},
\]
or by \eqref{eq:mmu},
\[
  \alpha = 
\begin{cases}
  \frac{1}{4}+\frac{1+\sqrt{8\gamma^2+1}}{16\gamma^2}, &\mbox{if}~\gamma \ge \frac{1+\sqrt{2}}{2},\\   \frac{2\gamma+1}{8\gamma^2}+\frac{\sqrt{8\gamma+3}}{16\gamma^2}, &\mbox{if}~\frac{1}{2}<\gamma< \frac{1+\sqrt{2}}{2}.
\end{cases}
\]
It is easy to see that in either case we have $4\gamma > (4\gamma^2-1)/(8\alpha \gamma)$, and so we require that $\|\wt{G}\|^2< (4\gamma^2-1)/(8\alpha \gamma)$,
or equivalently $\gamma$ and $G$ satisfy \eqref{eq:gammaG}.
\end{proof}

\section{Acknowledgement}
\label{sec:acknowledgement}
This material is based upon work supported by the National Science Foundation under Grant No. DMS-1440140 while both authors were in residence at the Mathematical Sciences Research Institute in Berkeley, California, during Fall 2015 semester. The first author is partially supported by the projects MTM2011-24054, IT641-13. Both authors would like to thank L. Escauriaza and L. Vega for fruitful conversations. The authors would also like to thank C. Mueller and A. Debussche for discussions on stochastic conformal transformation, and thank S. Lototsky and F. Flandoli for discussions on interior regularity for stochastic PDEs.


\begin{thebibliography}{10}

\bibitem{bfgrv}
J.~Barcel\'{o}, L.~Fanelli, S.~Guti\'{e}rrez, A.~Ruiz, and M.~Vilela.
\newblock {Hardy uncertainty principle and unique continuation properties of
  covariant Schr\"{o}dinger flows}.
\newblock {\em J. Funct. Anal.}, 264(10):2386--2415, May 2013.

\bibitem{cf}
B.~Cassano and L.~Fanelli.
\newblock {Sharp Hardy uncertainty principle and gaussian profiles of covariant
  Schr\"{o}dinger evolutions}.
\newblock {\em Trans. Am. Math. Soc.}, 367(3):2213--2233, Sept. 2014.

\bibitem{cekpv}
M.~G. Cowling, L.~{Escauriaza}, C.~E. Kenig, G.~Ponce, and L.~{Vega}.
\newblock {The Hardy uncertainty principle revisited}.
\newblock {\em Indiana Univ. Math. J.}, 6(59):2007-2026, 2010.

\bibitem{Guzman1975}
M.~de~Guzman.
\newblock {\em {Differentiation of Integrals in $\mathbb{R}^n$}}, volume 481 of
  {\em Lecture Notes in Mathematics}.
\newblock Springer Berlin Heidelberg, 1975.

\bibitem{ss}
E.~M. Stein and R.~Shakarchi.
\newblock {\em {Complex Analysis}}, volume~2 of {\em Princeton Lectures in
  Analysis}.
\newblock Princeton University Press, 2003.

\bibitem{ekpv1}
L.~Escauriaza, C.~E. Kenig, G.~Ponce, and L.~Vega.
\newblock {On Uniqueness Properties of Solutions of Schr\"{o}dinger Equations}.
\newblock {\em Commun. Partial Differ. Equations}, 31(12):1811--1823, Dec.
  2006.

\bibitem{Escauriaza2008}
L.~Escauriaza, C.~E. Kenig, G.~Ponce, and L.~Vega.
\newblock {Hardy's uncertainty principle, convexity and Schr\"{o}dinger
  evolutions}.
\newblock {\em J. Eur. Math. Soc.}, 10(4):883--907, 2008.

\bibitem{ekpv3}
L.~Escauriaza, C.~E. Kenig, G.~Ponce, and L.~Vega.
\newblock {The sharp Hardy uncertainty principle for Schr\"{o}dinger
  evolutions}.
\newblock {\em Duke Math. J.}, 155(1):163--187, Oct. 2010.

\bibitem{ekpv5}
L.~Escauriaza, C.~E. Kenig, G.~Ponce, and L.~Vega.
\newblock {Uncertainty principle of Morgan type and Schr\"odinger evolutions}.
\newblock {\em J. London Math. Soc.}, 83(1):187--207, Dec. 2010.

\bibitem{ekpv4}
L.~Escauriaza, C.~E. Kenig, G.~Ponce, and L.~Vega.
\newblock {Uniqueness properties of solutions to Schr\"{o}dinger equations}.
\newblock {\em Bull. Am. Math. Soc.}, 49(3):415--442, Sept. 2012.

\bibitem{ekpv6}
L.~Escauriaza, C.~E. Kenig, G.~Ponce, and L.~Vega.
\newblock {Hardy Uncertainty Principle, Convexity and Parabolic Evolutions}.
\newblock {\em Commun. Math. Phys.}, Nov. 2015.

\bibitem{fb3}
A.~Fern\'{a}ndez-Bertolin.
\newblock {Convexity Properties of Discrete Schr\"odinger evolutions and
  Hardy's Uncertainty Principle}.
\newblock {arXiv:1506.03717 [math.AP]}.

\bibitem{fbv}
A.~Fern\'{a}ndez-Bertolin and L.~Vega.
\newblock {Uniqueness Properties for Discrete equations and Carleman
  estimates}.
\newblock {arXiv:1509.08545 [math.AP]}.

\bibitem{Flandoli1995}
F.~Flandoli.
\newblock {\em {Regularity Theory and Stochastic Flows for Parabolic SPDEs}},
  volume~9 of {\em Stochastic Monographs}.
\newblock Gordon and Breach Science Publishers, London, 1995.

\bibitem{ha}
G.~H. Hardy.
\newblock {A Theorem Concerning Fourier Transforms}.
\newblock {\em J. London Math. Soc.}, s1-8(3):227--231, July 1933.

\bibitem{jlmp}
P.~Jaming, Y.~Lyubarskii, E.~Malinnikova, and K.-M. Perfekt.
\newblock {Uniqueness for discrete Schr\"odinger evolutions}.
\newblock {arXiv:1505.05398 [math.AP]}.

\bibitem{Revuz1999}
D.~Revuz and M.~Yor.
\newblock {\em {Continuous Martingales and Brownian Motion}}, volume 293 of
  {\em Grundlehren der mathematischen Wissenschaften}.
\newblock Springer Berlin Heidelberg, Berlin, Heidelberg, 1999.

\bibitem{sst}
A.~Sitaram, M.~Sundari, and S.~Thangavelu.
\newblock {Uncertainty principles on certain Lie groups}.
\newblock {\em Proc. Math. Sci.}, 105(2):135--151, May 1995.

\bibitem{YZ16}
D.~Yang and J.~Zhong.
\newblock {Observability inequality of backward stochastic heat equations for
  measurable sets and its applications}.
\newblock {\em SIAM J. Control Optim.}, to appear.

\bibitem{Zuazua2007}
E.~Zuazua.
\newblock {Controllability and Observability of Partial Differential Equations:
  Some Results and Open Problems}.
\newblock In {\em Handb. Differ. Equations Evol. Equations}, volume~3,
  chapter~7, pages 527--621. Elsevier Science, 2007.

\end{thebibliography}
\end{document}